\newtheorem{theorem}{Theorem}[section]
\newtheorem{lemma}[theorem]{Lemma}
\newtheorem{proposition}[theorem]{Proposition}
\newtheorem{corollary}[theorem]{Corollary}
\numberwithin{equation}{section}
\def\PG{{\sf PG}}
\def\D{\Delta} \def\t{\theta} 
\title{Automorphisms and opposition in twin buildings}
\author{Alice Devillers \and James Parkinson\footnote{Research supported under the Australian Research Council (ARC) discovery grant DP110103205.} \and Hendrik Van Maldeghem}
\date{}
\begin{document}
\maketitle

\begin{abstract}
We show that every automorphism of a thick twin building interchanging the halves of the building maps some residue to an opposite one. Furthermore we show that no automorphism of a locally finite 2-spherical twin building of rank at least 3 maps every residue of one fixed type to an opposite. The main ingredient of the proof is a lemma that states that every duality of a thick finite projective plane admits an absolute point, i.e., a point mapped onto an incident line. Our results also hold for all finite irreducible spherical buildings of rank at least 3, and as a consequence we deduce that every involution of a thick irreducible finite spherical building of rank at least 3 has a fixed residue. \let\thefootnote\relax
\footnote{2010 Mathematics Subject Classification: 20E42, 51E24}\footnote{Keywords: Twin buildings, projective planes, spherical buildings}
\end{abstract}

\section{Introduction}

Fixpoint structures of automorphisms play an important and prominent role in the theory of buildings and related groups. Recently it was shown in the papers \cite{TTV1,TTV2,TTV3,VM} that large fixpoint structures in spherical buildings are often implied by automorphisms that do not map any chambers to opposite chambers. In other words, if no chamber is mapped far away then the automorphism will fix a lot of simplices. In the present paper we consider complementary questions for twin buildings~$\Delta=(\Delta^+,\Delta^-,\delta^*)$. We consider automorphisms $\theta:\Delta\to \Delta$ which interchange the two halves of the building. There is quite a lot known when $\theta$ is assumed to be an involution (see \cite{DMGH}, \cite{GHM}, \cite{Horn}), but we do not make this assumption here.

\newpage

An initial observation is (c.f. \cite{AB2} for the spherical case, \cite{Horn} for $\theta$ an involution, and Section~\ref{section:Main0} for the general case): 

\begin{proposition}\label{Main0} Let $\theta$ be an automorphism of a thick twin building $\Delta$ swapping the two halves. Then there is a spherical residue of $\Delta$ which is mapped to an opposite residue.
\end{proposition}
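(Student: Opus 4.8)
The plan is a \emph{minimal displacement} argument. For $c \in \Delta^+$ its image $\theta(c)$ lies in $\Delta^-$, so the codistance $\delta^*(c,\theta(c)) \in W$ is defined, and since $c \mapsto \ell(\delta^*(c,\theta(c)))$ takes values in $\mathbb{Z}_{\ge 0}$ I fix a chamber $c$ for which it is minimal. Write $w := \delta^*(c,\theta(c))$, let $J := \mathrm{supp}(w) \subseteq S$ be the set of generators occurring in a reduced word for $w$, and let $R$ be the residue of $\Delta^+$ of type $J$ through $c$. The assertion I aim for is that $J$ is spherical and that $\theta(R)$ is opposite $R$. (If $\theta$ is not type-preserving it induces a diagram automorphism $\sigma$ and carries $s$-panels to $\sigma(s)$-panels; this only adds routine bookkeeping, so I describe the type-preserving case.)

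Granting that $J$ is spherical, the conclusion follows from a computation in a twin apartment. Choose a twin apartment $\Sigma = (\Sigma^+,\Sigma^-)$ containing $c$ and $\theta(c)$ --- such exists for any pair of chambers on opposite sides --- and identify $\Sigma^+$ with the Coxeter complex of $W$ so that $c$ corresponds to $1$ and the chamber of $\Sigma^-$ at codistance $v$ from $c$ is labelled $v$, so that $\delta^*(u^+,v^-) = u^{-1}v$ throughout $\Sigma$. Then $R\cap\Sigma^+$ is labelled by $W_J$, and, since $w \in W_J$, the residue $\theta(R)\cap\Sigma^-$ is labelled by $wW_J = W_J$; hence for each $u \in W_J$ the chamber labelled $u$ of $R$ is opposite the chamber labelled $u$ of $\theta(R)$. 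Computing the projections $\proj_{\theta(R)}$ and $\proj_R$ inside $\Sigma$ by the gate property of twin buildings now gives $\proj_{\theta(R)}(R) = \theta(R)$ and $\proj_R(\theta(R)) = R$, i.e.\ $R$ and $\theta(R)$ are opposite; and since $J$ is spherical, $R$ is a spherical residue.

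So everything reduces to the claim that $W_J$ is finite. This is automatic when $\Delta$ is spherical (recovering the known case), so the content is in the case $W$ infinite, and I would establish it by contradiction using thickness and the perturbation of $c$ inside panels. The tool is the gate property: for a panel $P$ of $\Delta^+$ and a chamber $d$ of $\Delta^-$, the codistances $\delta^*(c',d)$, $c' \in P$, assume exactly two values, one attained at a single chamber $\proj_P(d)$ and the other at all remaining chambers of $P$ (and symmetrically with $\Delta^\pm$ interchanged). Perturbing $c$ within a panel $P$ of $\Delta^+$ whose type $s$ is a descent of $w$, all of $\theta(c)$ and the $\theta(c')$, $c' \in P$, lie in the single panel $\theta(P)$ of $\Delta^-$; comparing $\delta^*(c',\theta(c))$ with $\delta^*(c',\theta(c'))$ through the gate property, one finds that minimality of the displacement of $c$ forces $\theta(c') = \proj_{\theta(P)}(c')$ for \emph{every} $c' \in P\setminus\{c\}$. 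Feeding this back through the gate relations, a case analysis of the resulting codistances contradicts the thickness of $P$: for instance when $J = \{s,t\}$ with $s,t$ generating an infinite dihedral group one computes that $\proj_{\theta(P)}(c')$ must be one and the same chamber of $\theta(P)$ for all $c' \in P\setminus\{c\}$, which is impossible since $\theta$ is injective and $P$ has at least three chambers.

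The step I expect to be the main obstacle is precisely this last claim, and within it the bookkeeping of the codistances $w,\,sw,\,sws,\dots$ produced as $c$ moves in a panel: because $\theta(c')$ moves together with $c'$, one is tracking a projection \emph{simultaneously in both halves} of the twin building, and one must keep control of which of the two codistance-values available in a given panel is realised by $\theta(c')$ as opposed to by the other chambers. Thickness is genuinely used --- it supplies the ``third chamber'' that produces the contradiction. The remaining ingredients --- existence of the minimum, the twin-apartment computation passing from $\delta^*(c,\theta(c)) \in W_J$ to opposition of $R$ and $\theta(R)$, the projection facts, and the extra bookkeeping when $\theta$ induces a non-trivial diagram automorphism (where one works with the $\sigma$-twisted codistance and obtains $\theta(R)$ opposite $R$ with appropriately matched types) --- are routine from the standard theory of twin buildings.
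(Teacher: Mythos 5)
There is a genuine gap, and it sits exactly where you predicted: the sphericity of $J$. You take $J=\mathrm{supp}(w)$, the support of the minimal codistance $w$. With that choice the opposition of $R$ and $\theta(R)$ is indeed nearly free (it follows from $w\in W_J$ by a twin-apartment or projection computation), but the finiteness of $W_{\mathrm{supp}(w)}$ is then precisely the hard content of the proposition, and your sketch does not establish it. The perturbation-in-panels argument you outline naturally produces information about the \emph{descents} of $w$ (you even restrict to panels ``whose type $s$ is a descent of $w$''), not about its support; there is no route indicated from ``minimality constrains what happens at descent panels'' to ``$W_{\mathrm{supp}(w)}$ is finite'', and the one case you gesture at ($J=\{s,t\}$ infinite dihedral) is neither carried out nor representative of the general situation. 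Note also that the two halves of your argument pull in opposite directions: if you instead took $J$ to be the descent set $\{s:\ell(sw)<\ell(w)\}$, then $W_J$ would be finite for free (this is \cite[Proposition~2.17]{AB}), but $w\in W_J$ — which your opposition argument needs — would no longer be automatic. Reconciling the two is the actual theorem to be proved here.

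The paper resolves this by taking $J$ to be the descent set (so sphericity is immediate) and then proving a Coxeter-group lemma: if $\ell(w\sigma(s))<\ell(w)$ and $\ell(sw\sigma(s))=\ell(w)$ for all $s\in J$, then $\sigma(J)=J$, $w=w_J$ is the longest element of $W_J$, and $sw\sigma(s)=w$. The hypotheses of that lemma are exactly what the thickness/panel-perturbation argument delivers at a chamber of minimal displacement (this is where thickness and the two-valued codistance on panels are used, and where the diagram automorphism $\sigma$ enters essentially — it is not mere bookkeeping, since the conclusion $sw\sigma(s)=w$ is what lets the codistance $w_J$ propagate over the whole $J$-residue). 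As a corollary $\mathrm{supp}(w)=J$, so your residue would in the end coincide with the paper's; but as written your proof is missing its central step. To repair it you would need to prove, not merely assert, that $w$ is the longest element of the parabolic generated by its descents.
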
 

Thus it is natural to ask `How much can be mapped to an opposite?'. For example, in the real projective plane $\PG(2,\mathbb{R})$ the automorphism $\theta$ given by $[a:b:c]\leftrightarrow (a:b:c)$ (in homogeneous coordinates) maps \textit{every} flag to an opposite flag. Furthermore it is shown in \cite[Remark~4.5]{TTV2} that there are automorphisms of thick finite generalised quadrangles of order $(2^n-1,2^n+1)$ mapping \textit{every} flag to an opposite. Considered as twin buildings, these examples show that it is possible for an automorphism of a twin building to map every chamber to an opposite chamber. Other such examples can be constructed for the twin tree arising from $SL_2(\mathbb{F}[t,t^{-1}])$ where $\mathbb{F}$ is any field. Despite these examples our main theorem is:

\begin{theorem}\label{Main1}
Suppose that $\Delta$ is an irreducible $2$-spherical locally finite thick twin building of type $(W,S)$ with rank at least $3$ and let $J\subseteq S$ be nonempty. Then an automorphism $\theta$ cannot map every $S\backslash J$-residue to an opposite residue.
\end{theorem}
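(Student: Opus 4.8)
Suppose for contradiction that $\theta$ maps every $(S\setminus J)$-residue of $\Delta$ to an opposite residue, and set $K=S\setminus J$. Two residues lying in the same half of a twin building are never opposite, so if $W$ is infinite then $\theta$ must interchange $\Delta^+$ and $\Delta^-$; if $W$ is finite, $\Delta$ is a finite thick irreducible spherical building and $\theta$ is an arbitrary automorphism. In both situations the type permutation $\pi$ induced by $\theta$ must preserve $K$ (up to the opposition permutation, in the spherical case), since otherwise a $K$-residue is sent to a residue whose type already forbids it from being opposite. So one may assume $\pi$ is compatible with $K$ in this sense.

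Next I would establish a \emph{propagation} principle: if $\theta$ maps every residue of type $T$ to an opposite one and $\pi(T')=T'$ for some $T\subseteq T'\subsetneq S$, then $\theta$ maps every residue of type $T'$ to an opposite one. This is immediate, since a $T'$-residue $R'$ contains a $T$-residue $R''$, the image $\theta(R'')$ is opposite $R''$, hence $R'$ and $\theta(R')$ contain a pair of opposite chambers, which is exactly the condition for $R'$ and $\theta(R')$ to be opposite. Running this over $\pi$-orbits of nodes gives that $\theta$ maps every $K'$-residue to an opposite one for every $\pi$-stable $K'$ with $K\subseteq K'\subsetneq S$; this produces a rich supply of flipped residues of many types while keeping control over the diagram.

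The heart of the argument is then to land inside a residue $\Pi$ that is a thick finite projective plane. Here rank $\ge 3$, $2$-sphericity and irreducibility enter: for all but a short list of diagrams there is a bond with $m_{ab}=3$, so $\Delta$ has rank-$2$ residues of type $\{a,b\}$ that are generalized $3$-gons, i.e.\ projective planes, finite by local finiteness; moreover the propagation principle lets one choose such a $\Pi$ together with a prescribed family of flipped sub-residues inside it. Since $\Pi$ is mapped to a residue $\Pi^\theta$ and opposition restricts well to the sub-twin-building $(\Pi,\Pi^\theta)$, the map $\theta$ composed with the opposition isomorphism $\proj_{\Pi^\theta}\colon\Pi\to\Pi^\theta$ descends to a self-map $\bar\theta$ of $\Pi$; because conjugation by the longest element interchanges the two node types of an $A_2$, the map $\bar\theta$ is a \emph{duality} of $\Pi$, and the assumption that the chosen sub-residues of $\Pi$ are flipped translates precisely into the statement that $\bar\theta$ has no absolute point. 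This contradicts the lemma quoted in the introduction, and the proof is complete in this case.

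\textbf{The main obstacle} is the existence of the projective plane $\Pi$. For the finitely many irreducible $2$-spherical diagrams of rank $\ge 3$ containing no bond labelled $3$ --- notably the affine type $\widetilde C_2$, and certain hyperbolic types --- $\Delta$ has \emph{no} projective-plane residue, so the argument above has nothing to restrict to. These cases must be handled separately: after the propagation step one is reduced to an automorphism mapping every residue of a fixed maximal spherical type (a generalized quadrangle, in the $\widetilde C_2$ case) to an opposite one, and a contradiction has to be extracted from how several such residues fit together along common panels --- ultimately, I expect, still by exhibiting a duality of a thick finite projective plane with no absolute point on an auxiliary substructure, or by a direct combinatorial argument inside a quadrangle. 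This no-triangle case, together with the bookkeeping forced when $\pi$ acts non-trivially on the diagram (e.g.\ a triality of $\widetilde A_2$, or on a $D_4$-residue), is where I expect the real work to lie; the remainder is routine manipulation of the twin-building axioms and of opposition in spherical buildings.
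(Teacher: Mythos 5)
There is a genuine gap, and it is at the centre of the argument. Your ``propagation principle'' only moves \emph{upward}: from flipped residues of type $T=S\setminus J$ to flipped residues of types $T'\supseteq T$. (The principle itself is essentially correct for same-type residues, granted the type condition $\pi(T')=T'$ which you assert rather than prove.) But the theorem is hardest exactly when $J$ is small: if $J=\{s\}$, the hypothesis is that the corank-one residues of cotype $s$ are flipped, and then the only $T'$ with $S\setminus\{s\}\subseteq T'\subsetneq S$ is $S\setminus\{s\}$ itself, so propagation produces nothing beyond the hypothesis. In particular you never learn anything about panels, chambers, or rank-$2$ residues. What the proof actually requires is the \emph{downward} reduction: from ``all $S\setminus J$-residues are flipped'' to ``all chambers are flipped.'' This is the paper's Theorem~\ref{Main2}, and its proof is not a containment argument: Lemma~\ref{lem1} shows (via (Tw3), \cite[Lemma~5.139]{AB} and the Deletion Condition) that the induced diagram automorphism fixes each $s\in J$, and Lemma~\ref{lem2} is an induction on a ``Coxeter distance'' in the diagram, using thickness and (Tw3) to move a chamber one step and strictly decrease that distance. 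Without this step your restriction to a projective-plane residue $\Pi$ is unavailable: $\Pi$ and $\Pi^\theta$ need not be opposite, and even if they were, the hypothesis says nothing about the points and lines of $\Pi$ being flipped, so no duality without absolute points is produced and the quoted lemma cannot be invoked.

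Two secondary remarks. First, the ``main obstacle'' you identify (diagrams with no bond labelled $3$, such as $\widetilde C_2$) disappears in the paper's route: once all chambers are flipped, one restricts to \emph{any} rank-$2$ residue; these are Moufang by \cite[5.6 Corollary~3]{T1}, hence by Fong--Seitz have parameters with $\gcd(s,t)>1$, and Proposition~\ref{Beukjeeven} (from \cite{TTV0}) rules out collineations of finite quadrangles, hexagons and octagons mapping every chamber to an opposite, while the projective-plane case is handled by the new absolute-point result (Proposition~\ref{dualitiesareisotropic}). So no case split on the diagram is needed. Second, your identification of the induced map on an $A_2$-residue as a duality (via conjugation by the longest element) is correct and matches the paper, but it only becomes usable after the downward reduction described above.
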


(In fact we prove a slightly stronger statement; see Theorem~\ref{thm:precise}. Note also that in the `simplicial complex' language $S\backslash J$-residues correspond to simplices of type $J$). 

In particular we see that under the hypothesis of Theorem~\ref{Main1} it is impossible for an automorphism $\theta$ to map every chamber to an opposite. After some general reductions, the proof of Theorem~\ref{Main1} boils down to looking at rank~2 residues. Recent results of \cite{TTV0} (see also \cite{T}) imply that finite Moufang generalized polygons other than projective planes do not admit collineations mapping every chamber to an opposite. For projective planes, the results of \cite{TTV0} can only be applied under some restrictions. In the present paper we remove these restrictions, and show that no duality of any finite projective plane can map every chamber to an opposite. These observations will imply Theorem~\ref{Main1}.

Along the way to proving Theorem~\ref{Main1} we prove the following theorem, which is of interest in its own right.

\begin{theorem}\label{Main2}
Let $\Delta$ be a twin building of type $(W,S)$ and let $J\subseteq S$ be nonempty. An automorphism $\theta:\Delta\to\Delta$ maps all $S\backslash J$-residues to opposite residues if and only if it maps all chambers to opposite chambers. Moreover, if $\theta$ maps all chambers to opposites then $\theta$ is necessarily type preserving, and  maps all residues to opposite residues.
\end{theorem}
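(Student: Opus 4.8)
The plan is to prove three implications, from which the theorem follows: \textbf{(a)} if $\theta$ maps every chamber to an opposite chamber then $\theta$ is type preserving; \textbf{(b)} if $\theta$ maps every chamber to an opposite chamber then $\theta$ maps every residue to an opposite residue; and \textbf{(c)} if $\theta$ maps every $S\setminus J$-residue to an opposite residue then it maps every chamber to an opposite chamber. Indeed, (b) applied to residues of type $S\setminus J$ gives one implication of the stated equivalence, (c) gives the other, and (a)+(b) give the ``moreover'' clause. The workhorse throughout is the elementary combinatorics of the codistance: for a chamber $x$ of $\Delta^{\varepsilon}$ and an $s$-panel $P$ of $\Delta^{-\varepsilon}$, the function $z\mapsto\delta^*(x,z)$ takes exactly two values $u$ and $us$ on $P$ (of lengths differing by $1$), the \emph{longer} one being attained at a single chamber $\mathrm{proj}_P(x)$ of $P$ and the shorter at all the others; together with the ``gate'' form of this statement for projections onto spherical residues, and the fact that opposite residues have equal type and are linked by the opposition bijection $x\mapsto(\text{unique chamber of the other residue opposite }x)$.

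For \textbf{(a)}, suppose $\theta$ induces a permutation $\sigma$ of $S$ with $\sigma(s)\neq s$ for some $s$. Choose $s$-adjacent chambers $c\neq d$ of $\Delta^{+}$; then $\theta c,\theta d$ are $\sigma(s)$-adjacent and $\delta^*(c,\theta c)=\delta^*(d,\theta d)=1$, so also $\delta^*(\theta d,d)=1$. Applying the panel fact to the $\sigma(s)$-panel of $\theta c$ gives $\delta^*(c,\theta d)\in\{1,\sigma(s)\}$, and applying it to the $s$-panel of $d$ in $\Delta^{+}$ gives $\delta^*(c,\theta d)\in\{1,s\}$; since $\sigma(s)\neq s$ we get $\delta^*(c,\theta d)=1$, i.e.\ $c$ is opposite $\theta d$. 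As $d$ ranges over the $s$-panel of $c$, the chamber $\theta d$ ranges over the $\sigma(s)$-panel of $\theta c$, and we have just shown $c$ to be opposite every such chamber (the case $d=c$ being the hypothesis); this contradicts the panel fact, which exhibits $\mathrm{proj}_{P}(c)$ on that panel as \emph{not} opposite $c$. Hence $\sigma=\mathrm{id}$. For \textbf{(b)}, now that $\theta$ is type preserving, a residue $R$ of type $L$ is mapped to the residue $\theta R$ of the \emph{same} type $L$ on the other side, $\theta|_R\colon R\to\theta R$ is an isomorphism, and every chamber $x\in R$ is opposite $\theta x\in\theta R$; since a residue contains at most one chamber opposite a given chamber, $\theta|_R$ is the opposition isomorphism, so $R$ and $\theta R$ are opposite. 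This settles (a) and (b).

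The heart of the matter is \textbf{(c)}. Write $K=S\setminus J$. By hypothesis every $K$-residue $R$ of $\Delta^{+}$ is mapped by $\theta$ to an opposite residue $\theta R$, and since opposite residues have equal type, $\theta$ preserves the type $K$; hence the induced permutation $\sigma$ stabilises $K$ and $J$. Fix a chamber $c$ and set $R=R_K(c)$, the residue of type $K$ through $c$. Since $R$ and $\theta R$ are opposite and $\theta c\in\theta R$, we have $\delta^*(c,\theta c)\in W_K$ (the gate property for the spherical residue $\theta R$), and $\delta^*(c,\theta c)=1$ holds exactly when $\theta|_R$ agrees at $c$ with the opposition isomorphism $R\to\theta R$. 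Thus (c) is equivalent to: on every $K$-residue $R$ the map $\theta|_R$ \emph{is} the opposition isomorphism onto $\theta R$. I would prove this by propagation across the building: for $j$-adjacent $K$-residues $R,R'$ with $j\in J$ and $j$-adjacent chambers $c\in R$, $d\in R'$, the panel fact applied both to the $j$-panel of $c$ in $\Delta^{+}$ and to the $\sigma(j)$-panel of $\theta c$ in $\Delta^{-}$ relates $\delta^*(c,\theta c)$ to $\delta^*(d,\theta d)$; iterating along galleries and using connectedness should force $\theta|_R$ to coincide with the opposition isomorphism, anchored by its behaviour on the ``gate'' sub-residues $\mathrm{proj}_R(R')$. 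As additional input to this bookkeeping, one first verifies that $\theta$ is already type preserving under the hypothesis of (c) alone, again by a panel argument in the spirit of (a).

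The step I expect to be the real obstacle is precisely this propagation: the gate sub-residues $\mathrm{proj}_R(R')$ attached to the neighbours $R'$ of a fixed $K$-residue $R$ do \emph{not} in general cover $R$, so induction along adjacent residues is not by itself enough to pin down $\theta|_R$. Overcoming this needs the full strength of $\theta$ being a single global automorphism that behaves correctly on \emph{all} $K$-residues simultaneously --- most plausibly an induction on $|K|$ (or on gallery distance to a fixed residue), combined with a careful analysis of the codistances $\delta^*(x,y)$ for chambers lying in several residues and their mutual projections. Granting (c), the theorem follows by combining it with (a) and (b).
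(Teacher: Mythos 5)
Your forward direction is correct: if $\theta$ maps every chamber to an opposite then every residue $R$ goes to an opposite residue simply because each $C\in R$ has $C^\theta\in R^\theta$ opposite to it, and type preservation then comes for free since opposite residues have equal type. (Your separate panel argument for (a) works, but it is unnecessary; the paper gets both (a) and (b) in one line this way.) The problem is part (c), which is the entire substance of the theorem, and which your proposal does not prove. You correctly reduce to showing $\delta^*(C,C^\theta)=1$ for all $C$ starting from $\delta^*(C,C^\theta)\in W_{S\setminus J}$, but the ``propagation along adjacent $K$-residues'' you then describe is exactly the step you yourself flag as an unresolved obstacle, and no mechanism is supplied to close it. A concrete symptom that something essential is missing: irreducibility of $W$ (a standing hypothesis in Section~\ref{sect:definitions}) never enters your outline, yet the statement is false without it --- for a reducible $W$ with $S=J\sqcup K$ and $\Delta=\Delta_1\times\Delta_2$, the product of an opposite automorphism on the $J$-factor with a non-opposite one on the $K$-factor is $J$-opposite but not opposite. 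Any correct proof must therefore use connectedness of the Coxeter diagram somewhere, and yours has no place where it could.

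For comparison, the paper's proof of this direction is a two-stage reduction rather than a gallery propagation. Lemma~\ref{lem1} shows that a $J$-opposite $\theta$ is $\{s\}$-opposite for each $s\in J$: using (Tw3) and the two-valued panel fact one gets $\delta^*(D,D^\theta)\in\{sw,sws'\}$ with $w=\delta^*(C,C^\theta)\in W_{S\setminus J}$, and membership in $W_{S\setminus J}$ together with the Deletion Condition forces $sws'=w$ and then $s=s'$. Lemma~\ref{lem2} then upgrades $\{s\}$-opposite to opposite by an extremal argument: one takes a chamber $C$ minimising the distance in the Coxeter diagram between $s$ and the support of $\delta^*(C,C^\theta)$, and uses (Tw3) to step to an adjacent chamber that strictly decreases this distance unless $\delta^*(C,C^\theta)=1$; this is precisely where irreducibility (connectedness of the diagram) is used. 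Your instinct that the local gate/projection bookkeeping alone cannot pin down $\theta$ on a whole residue is right; the fix is not a finer induction on $|K|$ but this global minimality argument over the Coxeter diagram.
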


\section{Definitions}\label{sect:definitions}

Let $(W,S)$ be a Coxeter system. We always assume that $|S|<\infty$, and that $W$ is irreducible. A \textit{twin building} $\Delta$ of type $(W,S)$ consists of two buildings $\Delta^+$ and $\Delta^-$ of type $(W,S)$ along with a $W$-valued \textit{codistance function} $\delta^*$ which measures codistance between chambers in the `two halves' of the twin building. This codistance function satisfies the following three axioms, see \cite[Definition~5.133]{AB} or \cite[\S 2.2]{T1}, where we write $\delta$ for the usual $W$-valued distance function on the buildings $\Delta^+$ and $\Delta^-$, and where $\ell(w)$ denotes the length of $w\in W$ with respect to $S$.

The following must hold for all $\epsilon\in\{+,-\}$, $C\in\Delta^\epsilon$ and $D\in\Delta^{-\epsilon}$, where $w=\delta^*(C,D)$: 
\begin{enumerate}
{\setlength\itemindent{10pt}\item[(Tw1)] $\delta^*(D,C)=w^{-1}$;}
{\setlength\itemindent{10pt}\item[(Tw2)] if $E\in\Delta^{-\epsilon}$ is such that $\delta(D,E)=s\in S$ and $\ell(sw)=\ell(w)-1$, then $\delta^*(C,E)=sw$;}
{\setlength\itemindent{10pt}\item[(Tw3)] if $s\in S$, there exists $E\in\Delta^{-\epsilon}$ such that $\delta(D,E)=s$ and $\delta^*(C,E)=sw$.}
\end{enumerate}

Chambers $C$ and $D$ in the same half of the building are \textit{$s$-adjacent}, $s\in S$, if $\delta(C,D)=s$. Chambers $C$ and $D$ are \textit{adjacent} if they are $s$-adjacent for some $s\in S$. A \emph{$J$-residue} of $\Delta^{\epsilon}$, $J\subseteq S$, is a set of the form $
\{D\in\Delta^{\epsilon}\mid\delta(C,D)\in W_J\}
$ where $C\in\Delta^{\epsilon}$ and 
where $W_J$ is the subgroup of $W$ generated by $J$. A $J$-residue is also called a \emph{residue} or a \emph{residue of type $J$}. There is a standard way to interpret residues as simplices, whereby a $J$-residue becomes a simplex of \textit{type} $S\backslash J$. In particular, $S\backslash\{s\}$-residues correspond to vertices of type~$s$. In this paper we will mainly use the `residue' language.

Chambers $C$ and $D$ in different halves of the twin building are \textit{opposite} if $\delta^*(C,D)=1$. Residues $P$ and $Q$ in different halves of the twin building are \textit{opposite} if for each chamber $C$ in $P$ there is a chamber $D$ in $Q$ such that $C$ and $D$ are opposite. Note that the axioms imply that opposite residues have the same type. 

An \textit{automorphism} of a twin building is a bijection $\theta$ of the chamber sets which maps adjacent chambers to adjacent chambers, and preserves the opposition relation. In this paper we are interested in automorphisms $\theta$ which interchange the two halves of the twin building. In general we do not assume that $\theta$ is an involution, or that it is type preserving.

By a standard gallery argument one sees that an automorphism $\theta:\Delta\to \Delta$ of a twin building induces an automorphism $\sigma:S\to S$ of the Coxeter graph such that, if the chambers $C$ and $D$ are $s$-adjacent, then $C^{\theta}$ and $D^{\theta}$ are $\sigma(s)$-adjacent.

We recall some more terminology: an $\{s\}$-residue, $s\in S$, is sometimes called an $s$-\emph{panel} or a \emph{panel} for short. A (twin) building is called \emph{thick} if every panel contains at least three chambers. A $2$-\emph{spherical} (twin) building is one where every rank~$2$ residue is spherical, i.e., no rank~$2$ residue is a tree.  Recall also that a \emph{locally finite} building is one where the number of chambers in a panel is always finite; locally finite is equivalent to finite for spherical buildings. In every thick twin building and in every thick $2$-spherical building, the number of chambers in an $s$-panel, $s\in S$, only depends on $s$.  The building is locally finite precisely when all these numbers are integers.

\section{Proof of Proposition~\ref{Main0}}\label{section:Main0}

The arguments of this section are adapted from \cite{AB2} (where it is shown that automorphisms of non-spherical buildings have unbounded displacement) and~\cite{Horn} (where involutions of twin buildings are considered). 

\begin{lemma}
Let $\sigma:S\to S$ be an automorphism of the Coxeter graph of $W$. Let $w\in W$, and let $J=\{s\in S\mid\ell(sw)<\ell(w)\}$. Then $W_J$ is spherical, and if $\ell(w\sigma(s))<\ell(w)$ and $\ell(sw\sigma(s))=\ell(w)$ for all $s\in J$, then $\sigma(J)=J$, $w=w_J$ (the longest element of $W_J$), and $sw\sigma(s)=w$ for all $s\in J$.
\end{lemma}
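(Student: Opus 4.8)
The plan is to work entirely inside the Coxeter group $W$, using only standard facts about reduced words, the exchange/deletion condition, and the characterization of finite (spherical) parabolic subgroups. Write $J=\{s\in S\mid \ell(sw)<\ell(w)\}$; this is the \emph{left descent set} of $w$, often denoted $\mathrm{Des}_L(w)$. The first step is to show $W_J$ is spherical. For this I would argue that every element of $W_J$ is a \emph{left} prefix of $w$ in the sense that $w$ has a reduced expression beginning with a reduced expression for that element: indeed if $s_1,\dots,s_k\in J$ and $s_1\cdots s_k$ is reduced, an easy induction using the fact that $J$ is the left descent set (combined with the lifting/exchange property: if $\ell(s_iv)<\ell(v)$ for $s_i$ a left descent and $v$ has reduced prefix $s_1\cdots s_{i-1}$, then $s_1\cdots s_{i-1}s_i$ is still a prefix of $w$) shows $\ell(s_1\cdots s_k w)=\ell(w)-k$, forcing $k\le \ell(w)$. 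Hence $W_J$ is finite, i.e.\ spherical. In fact this argument shows more: $w$ has a reduced expression of the form $w_J u$ where $w_J$ is the longest element of $W_J$ (take $k=\ell(w_J)$), so $\ell(w)=\ell(w_J)+\ell(u)$ and $\ell(w_J w)=\ell(w)-\ell(w_J)$.

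Now bring in the hypotheses on $\sigma$: $\ell(w\sigma(s))<\ell(w)$ for all $s\in J$, so $\sigma(J)\subseteq \mathrm{Des}_R(w)$, the right descent set. Symmetric to the above, $W_{\sigma(J)}$ is spherical and $w$ has a reduced expression $v\,w_{\sigma(J)}$, giving $\ell(w)\ge \ell(w_{\sigma(J)})=\ell(w_J)$ since $\sigma$ preserves the Coxeter diagram and hence $|\sigma(J)|=|J|$ with $W_{\sigma(J)}\cong W_J$, so $\ell(w_{\sigma(J)})=\ell(w_J)$. Combining the left and right prefix decompositions: $w_J$ is a left prefix and $w_{\sigma(J)}$ is a right prefix, and I want to push these together to conclude $w=w_J=w_{\sigma(J)}$. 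The extra hypothesis $\ell(sw\sigma(s))=\ell(w)$ for $s\in J$ is what forces rigidity here: writing $w=w_J u$ reduced, for $s\in J$ we have $\ell(sw)=\ell(w)-1$; the condition $\ell(sw\sigma(s))=\ell(w)=\ell(sw)+1$ says $\sigma(s)$ is \emph{not} a right descent of $sw$. I would use this for every $s\in J$ to show that $u$ cannot absorb any of the $\sigma(s)$: more precisely, if $u\ne 1$ then $w$ has a right descent $t$ with $\ell(w_J\cdot(\text{something}))$... the cleanest route is to show $\mathrm{Des}_R(w)=\sigma(J)$ and then that $w_{\sigma(J)}$ being a right prefix together with $\ell(w)=\ell(w_J)=\ell(w_{\sigma(J)})$ forces $w=w_{\sigma(J)}=w_J$, whence also $\sigma(J)=J$ (two parabolic subgroups with the same longest element are equal, as the longest element determines its support). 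Finally, $sw\sigma(s)=w$ for $s\in J$: since $w=w_J$ and $s\in J$, $sw_J$ has $\sigma(s)$ as... here I use $\ell(sw\sigma(s))=\ell(w)$ and $\ell(sw)=\ell(w)-1$, so $sw\sigma(s)$ has length $\ell(w)$ and equals $sw\cdot\sigma(s)$ with one ascent; the only length-$\ell(w)$ elements reachable are determined by the Bruhat order structure, and the equality $sw_J\sigma(s)=w_J$ is exactly the statement that right multiplication by $\sigma(s)$ inverts the left multiplication by $s$ on the longest element — equivalently $w_J s w_J=\sigma(s)$, which is a standard identity for the longest element of a finite Coxeter group (conjugation by $w_J$ is the diagram automorphism $s\mapsto w_J s w_J$), so it suffices to identify this diagram automorphism with $\sigma$ restricted to $J$, which follows from $sw\sigma(s)=w$ holding for at least one $s$ plus connectedness arguments, or directly from comparing lengths for each $s$.

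The main obstacle I anticipate is the step that simultaneously pins down $\sigma(J)=J$ and $w=w_J$: a priori $\sigma$ could map $J$ to a different subset of $S$ generating an isomorphic parabolic, and $w$ could be $w_J$ times a nontrivial element $u$ with $u$ having right descents outside $\sigma(J)$. The hypothesis $\ell(sw\sigma(s))=\ell(w)$ for \emph{all} $s\in J$ is designed precisely to kill this, and the delicate part is extracting from it that $u=1$. I would handle this by a careful case analysis on a reduced word for $w=w_Ju$: pick $s\in J$ that is still a left descent of $w$, note $sw=w_J'u$ for an appropriate reduced form, and track what $\ell(sw\sigma(s))=\ell(sw)+1$ says about where $\sigma(s)$ sits relative to $u$; iterating over all $s\in J$ and using that $W_J$ acts with longest element $w_J$ should force $u$ to commute past everything and ultimately be trivial. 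An alternative, possibly cleaner, approach is to count: $\ell(w)\ge \ell(w_J)$ and $\ell(w)\ge \ell(w_{\sigma(J)})=\ell(w_J)$ give nothing directly, but combining $\ell(w_J w)=\ell(w)-\ell(w_J)$ with the hypothesis that \emph{each} generator of $\sigma(J)$ is a right descent of $w$ while \emph{each} generator of $J$ is a left descent, plus the "non-descent" condition $\ell(sw\sigma(s))=\ell(w)$, should yield $\ell(w)=\ell(w_J)$ by a dimension/parabolic argument — after which $w=w_J$ is automatic. I would try the counting route first and fall back on the explicit reduced-word manipulation if needed.
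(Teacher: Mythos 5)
Your plan assembles the right ingredients (left/right descent sets, the decompositions $w=w_Ju$ and $w=v'w_{\sigma(J)}$ with lengths adding, and the reading of $\ell(sw\sigma(s))=\ell(w)$ as ``$\sigma(s)$ is not a right descent of $sw$''), but the decisive step --- showing that the complementary factor is trivial, i.e.\ $w=w_J$ --- is never actually carried out. You flag it yourself as the delicate part and offer two speculative routes. The ``counting route'' does not work as described: knowing $\mathrm{Des}_L(w)=J$, $\sigma(J)\subseteq\mathrm{Des}_R(w)$, and even $\mathrm{Des}_R(w)=\sigma(J)$ does not pin down $\ell(w)=\ell(w_J)$, since there are plenty of elements longer than $w_J$ whose descent sets are exactly $J$ on the left and $\sigma(J)$ on the right; no ``dimension/parabolic argument'' closes this. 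The reduced-word route is the right one, but your framing via the left factorization $w=w_Ju$ (trying to show ``$u$ cannot absorb any $\sigma(s)$'') is the awkward side to work from. The paper's trick is to use the \emph{right}-sided decomposition $w=v'w_{\sigma(J)}$ with $\ell(w)=\ell(v')+\ell(w_{\sigma(J)})$: if $v'\neq 1$ with reduced word $s_1\cdots s_r$, then $s_1$ is a left descent of $w$, so $s_1\in J$; choosing a reduced word for $w_{\sigma(J)}$ ending in $\sigma(s_1)$ (possible since $w_{\sigma(J)}$ is the longest element of $W_{\sigma(J)}$) exhibits a reduced expression for $w$ that begins with $s_1$ and ends with $\sigma(s_1)$, forcing $\ell(s_1w\sigma(s_1))=\ell(w)-2$ and contradicting the hypothesis. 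Hence $v'=1$ and $w=w_{\sigma(J)}$; then $\sigma(s)w_{\sigma(J)}\in W_{\sigma(J)}$ is shorter than $w_{\sigma(J)}$, so $\sigma(J)\subseteq J$, and finiteness gives $\sigma(J)=J$, $w=w_J$.

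For the final identity your sketch is also more complicated than needed and partly circular (you propose to identify $\sigma|_J$ with conjugation by $w_J$ using ``$sw\sigma(s)=w$ holding for at least one $s$,'' which is the statement being proved). Once $w=w_J$ and $\sigma(J)=J$ are in hand, the element $sw\sigma(s)$ lies in $W_J$ and has length $\ell(w_J)$ by hypothesis, so it equals $w_J=w$ by uniqueness of the longest element --- no diagram-automorphism machinery is required.
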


\begin{proof} By~\cite[Proposition~2.17]{AB} the parabolic subgroup $W_J$ is finite. 
Assume now that  $\ell(w\sigma(s))<\ell(w)$ and $\ell(sw\sigma(s))=\ell(w)$ for all $s\in J$. The first inequality tells us that $\sigma(J)\subseteq J'=\{t\in S|\ell(wt)<\ell(w)\}$.
By the dual version of~\cite[Proposition~2.17]{AB}, it follows that there is an expression $w=v'w_{\sigma(J)}$ with $\ell(w)=\ell(v')+\ell(w_{\sigma(J)})$, since $w_{\sigma(J)}\in W_{J'}$. If $v'\neq 1$ has reduced expression $v'=s_1\cdots s_{r}$, then $s_1\in J$, and since there is a reduced  expression for $w_{\sigma(J)}$ ending in $\sigma(s_1)$ we obtain a reduced expression for $w$ which starts with $s_1\in J$ and ends with $\sigma(s_1)$, contradicting the condition that for all $s\in J$, we require $\ell(sw\sigma(s))=\ell(w)$. Therefore $v'=1$, and $w=w_{\sigma(J)}$. Since $\sigma(s)w_{\sigma(J)}$ ($s\in J$) is in $W_{\sigma(J)}$, it must be shorter than  $w_{\sigma(J)}$, and so $\sigma(J)\subseteq J$. Since $J$ is finite and $\sigma$ is a bijection, it follows that $\sigma(J)=J$ and $w=w_J$. Now  $sw\sigma(s)\in W_J$ for $s\in J$ (since $s,\sigma(s)\in J$ and $w=w_J\in W_J$) and  $\ell(sw\sigma(s))=\ell(w_J)$, so  $sw\sigma(s)=w$ for all $s\in J$ (by the uniqueness of the longest element of $W_J$).
\end{proof}

\begin{proof}[Proof of Proposition \emph{\ref{Main0}}]
Let $C$ be a chamber such that the codistance $w=\delta^*(C,C^{\theta})$ has minimal length. Let $J=\{s\in S\mid \ell(sw)<\ell(w)\}$, and let $\sigma:S\to S$ be the automorphism of the Coxeter graph induced by~$\theta$. We claim that for all $s\in J$ we have $\ell(w\sigma(s))<\ell(w)$ and $\ell(sw\sigma(s))=\ell(w)$.

For if $\ell(w\sigma(s))>\ell(w)$ then, by thickness, we can choose a chamber $D$ with $\delta(C,D)=s$ such that $\delta^*(C,D^{\theta})=w$. Since $\ell(sw)<\ell(w)$ we have $\delta^*(D,D^{\theta})=sw$, contradicting minimality of~$w$.

Therefore $\ell(w\sigma(s))<\ell(w)$, and so for all $D$ with $\delta(C,D)=s$ we have $\delta^*(C,D^{\theta})=w\sigma(s)$. Therefore $\delta^*(D,D^{\theta})\in\{w\sigma(s),sw\sigma(s)\}$. By minimality of $w$ we have $\delta^*(D,D^{\theta})=sw\sigma(s)$ and $\ell(sw\sigma(s))=\ell(w)$.

By the previous lemma,  $w=w_J=sw\sigma(s)\in W_J$.
By connectivity in the $J$-residue $R$ of $C$, we get that  $\delta^*(E,E^{\theta})=w_J\in W_J$ for all chamber $E$ in $R$, and so every chamber in $R$ is opposite to a chamber in $R^\theta$, that is, $R$ is a spherical residue which is opposite its image under $\theta$.
\end{proof}

\section{Proof of Theorem~\ref{Main2}}

In this section we prove Theorem~\ref{Main2}, which will be an ingredient of the proof of Theorem~\ref{Main1}. We call an automorphism \textit{$J$-opposite} if it maps all $S\backslash J$-residues to opposite residues (in other words, $\theta$ maps all type $J$ simplices to opposites). We simply say \textit{opposite} instead of $S$-opposite (and so an automorphism is opposite if it maps all chambers to opposite chambers).

A key observation is that if $\theta$ is $J$-opposite then $\delta^*(C,C^{\theta})\in W_{S\backslash J}$ for all chambers~$C$. Indeed if the $S\backslash J$-residue of $C$ is opposite the $S\backslash J$-residue of $C^{\theta}$ then there is a chamber $D$ in the $S\backslash J$-residue of $C$ with $\delta^*(D,C^\theta)=1$. Since $\delta(C,D)\in W_{S\backslash J}$ it follows that we have $\delta^*(C,C^{\theta})\in W_{S\backslash J}$.

Theorem~\ref{Main2} follows immediately from Lemmas~\ref{lem1} and~\ref{lem2} below. 

\begin{lemma}\label{lem1}
Suppose that the automorphism $\theta$ of $\Delta$ is $J$-opposite with $J\subseteq S$ nonempty. Then $\theta$ is $\{s\}$-opposite for every $s\in J$.
\end{lemma}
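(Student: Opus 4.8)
The plan is to show that if $\theta$ is $J$-opposite then the codistance $w = \delta^*(C, C^\theta)$ is actually forced to be the longest element of $W_J$ when restricted appropriately, and then deduce the $\{s\}$-opposite property for each $s \in J$ by a minimality/adjacency argument analogous to the one used in the proof of Proposition~\ref{Main0}. First I would record the key observation already noted before the lemma: $J$-oppositeness gives $\delta^*(C, C^\theta) \in W_{S\backslash J}$ for all chambers $C$. Combined with the analogous statement for a single $s \in J$ that we are aiming for, the target is to prove $\delta^*(C, C^\theta) \in W_{S\backslash\{s\}}$ for all $C$; since $\bigcap_{s \in J} W_{S \backslash \{s\}} = W_{S\backslash J}$, it would actually suffice (and be cleaner) to prove the single-$s$ statement directly for each fixed $s \in J$.

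The main step is to fix $s \in J$ and analyze how the codistance word $w = \delta^*(C, C^\theta)$ behaves under $s$-adjacency and under $\sigma(s)$-adjacency, where $\sigma$ is the induced Coxeter graph automorphism. The idea is: take a chamber $C$ and suppose for contradiction that its $S\backslash\{s\}$-residue is \emph{not} mapped to an opposite. Using axiom (Tw3) and thickness, I would produce chambers within the $S\backslash\{s\}$-residue of $C$, or within a panel, whose codistance to their images is shorter, eventually contradicting either the $J$-oppositeness hypothesis (which pins $\delta^*(E,E^\theta)$ into $W_{S\backslash J}$ for every $E$) or a length-minimality choice. Concretely: since $\theta$ is $J$-opposite, for \emph{every} chamber $E$ we have $\delta^*(E,E^\theta)\in W_{S\backslash J}$, so the set of possible codistance values is already heavily constrained; within a fixed $S\backslash\{s\}$-residue $R$ (with $s\in J$), I expect to show $\delta^*$ from a chamber to its image is constant on $R$ (by a connectivity argument using (Tw2)), and that this constant value $w$ satisfies $\ell(sw) < \ell(w)$ forcing — via the Lemma at the end of Section~3 applied with the relevant $J'$ — that $R$ is opposite $R^\theta$.

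Let me restructure for clarity. I would (i) fix $s \in J$, let $w = \delta^*(C, C^\theta)$ for an arbitrary chamber $C$, and note $w \in W_{S\backslash J} \subseteq W_{S \backslash \{s\}}$; (ii) we want to conclude the $S\backslash\{s\}$-residue of $C$ is opposite that of $C^\theta$, which by definition of opposite residues means: there is a chamber $D$ in the $S\backslash\{s\}$-residue of $C$ with $\delta^*(D, C^\theta) = 1$. Since $w \in W_{S\backslash\{s\}}$ and $W_{S\backslash\{s\}}$ is the residue's "type" group, I would use the twin-building axioms (specifically (Tw2), or the projection/convexity facts for twin buildings) inside the $S\backslash\{s\}$-residue to walk from $C$ toward $C^\theta$, reducing the codistance step by step, staying in the residue because $w \in W_{S\backslash\{s\}}$. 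The point is that $w \in W_{S\backslash\{s\}}$ means the entire gallery of length $\ell(w)$ realizing the codistance can be chosen inside the $S\backslash\{s\}$-residue of $C$ — each reduction step $\delta^*(C,C^\theta) \to s'\delta^*(C,C^\theta)$ with $s' \in S\backslash\{s\}$ moves within the residue — so we reach a chamber at codistance $1$ from $C^\theta$ lying in that residue. That gives oppositeness of the residues.

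\textbf{The main obstacle.} The delicate point is step (ii): verifying that the codistance-reducing gallery from $C$ to (a chamber opposite) $C^\theta$ can be kept inside the $S\backslash\{s\}$-residue of $C$. This is where the twin-building axioms must be used carefully — one needs that if $\delta^*(C, C^\theta) = w$ with $\ell(s'w) < \ell(w)$ for some $s' \in S \backslash \{s\}$, then there is a chamber $D$ with $\delta(C,D) = s'$ (hence $D$ in the same $S\backslash\{s\}$-residue) and $\delta^*(D, C^\theta) = s'w$, and that iterating stays within the residue since every letter of a reduced word for $w \in W_{S\backslash\{s\}}$ lies in $S\backslash\{s\}$. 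Axiom (Tw2) is essentially designed for this, but one has to be attentive about whether we are reducing on the correct side and whether the needed chamber exists without invoking (Tw3) (which would only give existence in the \emph{opposite} half). I expect the cleanest route is: first show $\delta^*(\cdot, C^\theta)$ restricted to the $S\backslash\{s\}$-residue $R$ of $C$ realizes, as $D$ ranges over $R$, all elements of $W_{S\backslash\{s\}} w$ — or at least realizes the value $1$ — by an induction on $\ell(w)$ using (Tw2) for the inductive step. Once oppositeness of each $S\backslash\{s\}$-residue with its image is established, $\{s\}$-oppositeness is immediate from the definition, completing the proof.
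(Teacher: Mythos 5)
Your proposal establishes only the easy half of the lemma and omits the point on which the paper's entire proof is spent. Your steps (i)--(ii) show that for each $s\in J$ and each chamber $C$ the $S\backslash\{s\}$-residue of $C$ is opposite the $S\backslash\{s\}$-residue of $C^\theta$; that part is essentially correct (it is the standard fact that two residues of the same type containing chambers at codistance lying in the corresponding parabolic subgroup are opposite, and the paper takes it for granted). But being $\{s\}$-opposite means that the residue $R$ of type $S\backslash\{s\}$ containing $C$ is opposite its \emph{image} $R^\theta$, and $R^\theta$ is the residue of type $S\backslash\{\sigma(s)\}$ containing $C^\theta$, where $\sigma$ is the diagram automorphism induced by $\theta$. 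Since opposite residues have the same type, the real content of the lemma is that $\sigma(s)=s$ for every $s\in J$. The hypothesis only gives $\sigma(J)=J$ setwise (because $S\backslash J$-residues go to opposite, hence equal-type, residues), so a priori $\sigma$ could permute $J$ nontrivially --- for instance swap the two end nodes of an $A_3$ diagram when $J$ consists of those two nodes --- in which case the residue your argument exhibits as opposite to $R$ is simply not $R^\theta$. You mention $\sigma$ once but never prove it fixes $J$ pointwise, and the gesture toward ``a minimality/adjacency argument analogous to Proposition~\ref{Main0}'' is never carried out.

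The paper's proof is precisely this missing argument: take $C$ in a type-$s$ vertex $x$ with $x^\theta$ of type $s'$, set $w=\delta^*(C,C^\theta)\in W_{S\backslash J}$, and use (Tw3) to produce $D$ with $\delta(C,D)=s$ and $\delta^*(D,C^\theta)=sw$. Then $\delta^*(D,D^\theta)\in\{sw,sws'\}$, and $J$-oppositeness forces $\delta^*(D,D^\theta)\in W_{S\backslash J}$, which rules out $sw$ (as $s\in J$ and $w\in W_{S\backslash J}$) and then forces $sws'=w$, i.e.\ $sw=ws'$ with both sides reduced, whence $s=s'$ by \cite[Proposition~2.16]{AB}. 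Nothing in your outline substitutes for this computation, so as written the proof is incomplete.
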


\begin{proof}
We only need to show that $\theta$ maps $(S\setminus\{s\})$-residues (which we shall briefly call here type~$s$ vertices using the simplicial complex terminology) to type~$s$ vertices, for each $s\in J$. Let $x$ be a type $s$ vertex, with $s\in J$, and suppose that $x^{\theta}$ has type~$s'$. Then $s'\in J$ because $\theta$ preserves $J$ set-wise (as $\theta$ maps $S\backslash J$-residues to $S\backslash J$-residues). Let $C$ be a chamber contained in~$x$. Since $\theta$ is $J$-opposite we have $w:=\delta^*(C,C^{\theta})\in W_{S\backslash J}$. By (Tw3) there is a chamber $D$ with $\delta(C,D)=s$ and $\delta^*(D,C^{\theta})=sw$. Since $\theta$ maps the type $s$ vertex of $C$ to the type $s'$ vertex of $C^{\theta}$ we have $\delta(C^{\theta},D^{\theta})=s'$, and so \cite[Lemma~5.139]{AB} gives $\delta^*(D,D^{\theta})\in\{sw,sws'\}$. Since $\theta$ is $J$-opposite we have $\delta^*(D,D^{\theta})\in W_{S\backslash J}$. Since $w\in W_{S\backslash J}$ this forces $\delta^*(D,D^{\theta})=sws'$, and since this must be an element of $W_{S\backslash J}$ the Deletion Condition \cite[\S2.1]{AB} implies that $sws'=w$. Thus $sw=ws'$. The expressions $sw$ and $ws'$ are reduced since $w\in W_{S\backslash J}$, and so by \cite[Proposition~2.16]{AB} we have $s=s'$.
\end{proof}

\begin{lemma}\label{lem:coxeter}
If $w\in W_{S\backslash\{s\}}$, $s\in S$, then $sw=ws$ if and only if $s$ commutes with each generator appearing in a reduced expression for~$w$.
\end{lemma}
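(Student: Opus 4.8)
The ``if'' direction is immediate: if $w=t_1\cdots t_k$ is a reduced expression in which each $t_i$ commutes with $s$, then $sw=st_1\cdots t_k=t_1\cdots t_ks=ws$. Since the set $\mathrm{supp}(w)$ of generators occurring in a reduced expression for $w$ does not depend on the chosen expression (a standard consequence of Tits' solution of the word problem), the ``only if'' direction is equivalent to the assertion that $w\in W_{S\setminus\{s\}}$ and $sw=ws$ force $w\in W_K$, where $K:=\{t\in S\setminus\{s\}:st=ts\}$; this is what I would prove.

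The plan is induction on $\ell(w)$, the case $w=1$ being trivial. Suppose $\ell(w)\geq1$ and fix a reduced expression $w=t_1\cdots t_k$, so each $t_i\in S\setminus\{s\}$. Since $w\in W_{S\setminus\{s\}}$, no reduced expression for $w$ involves $s$; in particular $\ell(sw)=\ell(ws)=\ell(w)+1$, and $st_1\cdots t_k$ and $t_1\cdots t_ks$ are both reduced expressions for $u:=sw=ws$. From $sw=ws$ we get $us=sws=w$, so $\ell(us)<\ell(u)$; and $\ell(ut_k)<\ell(u)$ because $st_1\cdots t_k$ is a reduced expression for $u$ ending in $t_k$. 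As $t_k\neq s$, the dual version of \cite[Proposition~2.17]{AB} applies: $W_{\{s,t_k\}}$ is finite and $u=v\,w_{\{s,t_k\}}$ for some $v$ with $\ell(u)=\ell(v)+m$, where $w_{\{s,t_k\}}$ is the longest element of $W_{\{s,t_k\}}$ and $m$ is the order of $st_k$.

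The crux is to show $m=2$. Assume $m\geq3$. Then $w_{\{s,t_k\}}$ has a reduced expression in the alphabet $\{s,t_k\}$ ending in $s$, whose last three letters are accordingly $s,t_k,s$; prefixing a reduced expression of $v$ gives a reduced expression of $u$ ending in $s,t_k,s$. Deleting the last letter yields a reduced expression of $us=w$ ending in $s,t_k$, and deleting one further letter yields a reduced expression of $wt_k=t_1\cdots t_{k-1}$ ending in $s$. This is absurd, since $wt_k\in W_{S\setminus\{s\}}$ has no reduced expression involving $s$. Hence $m=2$, that is, $s$ commutes with $t_k$. Now $w':=wt_k=t_1\cdots t_{k-1}$ lies in $W_{S\setminus\{s\}}$, has length $\ell(w)-1$, and a short computation using $sw=ws$ and $st_k=t_ks$ gives $sw'=w's$; by the induction hypothesis $w'\in W_K$, and since also $t_k\in K$ we conclude $w=w't_k\in W_K$, completing the induction.

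I expect the main obstacle to be the elimination of $m\geq3$: it requires the precise form of \cite[Proposition~2.17]{AB} that yields not only finiteness of $W_{\{s,t_k\}}$ but also length-additive right-divisibility of $u$ by its longest element, together with the repeated (but harmless) use of the fact that deleting the last letter of a reduced expression for an element $v$ leaves a reduced expression for $v$ times that letter. A shorter alternative bypasses descent sets altogether: in the geometric representation of $W$, the coefficient of $\alpha_s$ in $w\alpha_s$ equals $1$ because $w\in W_{S\setminus\{s\}}$, so $w\alpha_s$ is a positive root, while $sw=ws$ forces $w\alpha_s=\pm\alpha_s$; hence $w\alpha_s=\alpha_s$, and the standard description of the stabiliser of a point of the closed fundamental chamber (applied to $-\alpha_s$) identifies that stabiliser with $W_K$, giving $w\in W_K$ at once.
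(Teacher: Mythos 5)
Your main argument is correct, and it takes a genuinely different route from the paper. The paper's proof is a two-sentence appeal to Tits' solution of the word problem: it asserts that every reduced expression for $ws$ contains exactly one occurrence of $s$, and that any generator of $\mathrm{supp}(w)$ not commuting with $s$ must stay to the left of that occurrence under braid moves; since $sw=ws$ gives a reduced expression $st_1\cdots t_k$ with $s$ leftmost, the conclusion follows. That argument is short but leaves the analysis of how braid moves shuffle the letter $s$ entirely to the reader. You instead induct on $\ell(w)$: both $s$ and $t_k$ lie in the right descent set of $u=ws$, so the parabolic factorization $u=v\,w_{\{s,t_k\}}$ from the dual of \cite[Proposition~2.17]{AB} (the same tool the paper uses in its Section~3 lemma) applies; if $m_{s,t_k}\geq 3$ the longest dihedral element forces a reduced expression of $u$ ending in $s,t_k,s$, and truncating it exhibits $s$ in a reduced expression of an element of $W_{S\setminus\{s\}}$, contradicting the invariance of support. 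This is more self-contained and, in my view, more honest about where the work lies; the cost is the extra bookkeeping of the induction. One caveat: your ``shorter alternative'' via the geometric representation is not correct as stated, because $-\alpha_s$ does \emph{not} lie in the closed fundamental chamber (its pairing with $\alpha_s^\vee$ is negative), so the standard stabiliser theorem does not apply to it directly; the identity $w\alpha_s=\alpha_s$ is the right invariant, but deducing $w\in W_K$ from it for $w\in W_{S\setminus\{s\}}$ needs its own (easy) induction rather than a citation. Since that aside is not part of your actual proof, it does not affect the validity of your argument.
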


\begin{proof}
We have $\ell(ws)=\ell(w)+1$ because $w\in W_{S\backslash\{s\}}$. It follows from Tits' solution to the Word Problem \cite[Theorem~2.33]{AB} that every reduced expression for $ws$ has exactly~$1$ occurrence of the generator $s$, and that if $s'$ appears in a reduced expression for $w$, and if $s'$ does not commute with $s$, then every occurrence of $s'$ in any reduced expression for $ws$ must occur to the left of the unique $s$ generator.
\end{proof}

\begin{lemma}\label{lem2}
If $\theta$ is $\{s\}$-opposite for some~$s\in S$ then $\theta$ is $S$-opposite.
\end{lemma}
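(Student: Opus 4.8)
\textbf{Proof plan for Lemma~\ref{lem2}.}

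The plan is to show that if $\theta$ is $\{s\}$-opposite then in fact $\delta^*(C,C^\theta)=1$ for every chamber $C$, which is exactly the assertion that $\theta$ is $S$-opposite. Fix a chamber $C$ and write $w=\delta^*(C,C^\theta)$. By the key observation at the start of this section applied with $J=\{s\}$, we have $w\in W_{S\backslash\{s\}}$. Using (Tw3) choose $D$ with $\delta(C,D)=s$ and $\delta^*(D,C^\theta)=sw$. Since $\theta$ is $\{s\}$-opposite, the vertex of type $s$ on $C$ goes to the vertex of type $s$ on $C^\theta$ (Lemma~\ref{lem1} already gives $s^\sigma=s$, i.e. $\sigma$ fixes $s$), so $\delta(C^\theta,D^\theta)=s$; by \cite[Lemma~5.139]{AB} this gives $\delta^*(D,D^\theta)\in\{sw,sws\}$, and applying the key observation again forces $\delta^*(D,D^\theta)=sws\in W_{S\backslash\{s\}}$. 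Now I would argue as in Lemma~\ref{lem1}: $sws\in W_{S\backslash\{s\}}$ together with $w\in W_{S\backslash\{s\}}$ forces (via the Deletion/Word Problem machinery, using that $\ell(ws)=\ell(sw)=\ell(w)+1$) that $sws=w$, i.e. $sw=ws$.

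With $sw=ws$ in hand, Lemma~\ref{lem:coxeter} tells us that $s$ commutes with every generator appearing in a reduced expression for $w$. Now I bring in irreducibility of $(W,S)$: the set $T$ of generators appearing in reduced expressions for $w$ is contained in $S\backslash\{s\}$, and every element of $T$ commutes with $s$. I want to propagate this to conclude $w=1$. The idea is that the above argument was not special to the chamber $C$: for \emph{every} chamber $E$ I get $\delta^*(E,E^\theta)\in W_{S\backslash\{s\}}$ and commutes with $s$ in the same sense. Then, running over $E$ ranging in a panel of type $t$ for various $t\ne s$ and using a connectivity argument (the two halves are connected), one shows that the support $T$ of $\delta^*(C,C^\theta)$ is forced to be empty — otherwise one could find a generator $t\in T$ and, exploiting the local structure of rank-$2$ residues of type $\{s,t\}$ with $t$ not commuting with $s$, contradict the fact that $sw=ws$. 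More precisely, since $W$ is irreducible and $|S|\ge 2$ (the rank-$1$ case being trivial as then $S=\{s\}$ and $W_{S\backslash\{s\}}=1$), there is at least one $t\ne s$ with $st\ne ts$; one uses (Tw2)/(Tw3) to move from $C$ to a chamber $E$ across a $t$-panel and tracks how $\delta^*(E,E^\theta)$ changes, showing that if $t$ appeared in $w$ one could reach a chamber whose codistance to its image does not lie in $W_{S\backslash\{s\}}$, a contradiction.

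Carrying this out cleanly, the cleanest route is: first establish $sw=ws$ for all chambers (as above), hence by Lemma~\ref{lem:coxeter} the support of $\delta^*(C,C^\theta)$ is always contained in $S^\perp(s):=\{t\in S: st=ts\}\setminus\{s\}$; then show this support is in fact empty. For the latter I would fix $C$ minimizing $\ell(\delta^*(C,C^\theta))$ among all chambers, suppose $w\ne 1$, pick $t$ in the support of $w$ (so $t\in S^\perp(s)$), use thickness and (Tw2)/(Tw3) across a $t$-panel at $C$ to produce $D$ with $\delta^*(D,D^\theta)$ strictly shorter, contradicting minimality — here the fact that $t$ commutes with $s$ but $\theta$ fixes the type $t$ vertex of $C$ (again Lemma~\ref{lem1}, since $t$ must lie in the $\theta$-orbit structure) lets one run the reduction of Proposition~\ref{Main0}'s proof verbatim. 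The main obstacle is precisely this last step: making the reduction across a $t$-panel genuinely lower the length of the codistance rather than keep it constant, which requires knowing $\ell(tw)<\ell(w)$ for the chosen $t$; since $t$ lies in a reduced word for $w$ and commutes with $s$, one can move that occurrence of $t$ to the front using Lemma~\ref{lem:coxeter}-type commutation, so indeed $\ell(tw)<\ell(w)$, and then the argument of Proposition~\ref{Main0} applies to yield the contradiction and force $w=1$.
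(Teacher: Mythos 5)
Your first step is sound and is essentially the computation of Lemma~\ref{lem1} specialised to $J=\{s\}$: for every chamber $C$ one gets $w=\delta^*(C,C^\theta)\in W_{S\setminus\{s\}}$ and $sws=w$, so by Lemma~\ref{lem:coxeter} every generator in the support of $w$ commutes with $s$. The gap is in the second half, which is where all the content of Lemma~\ref{lem2} lies. You minimise $\ell(w)$, pick $t$ in the support of $w$, and claim that running the reduction from the proof of Proposition~\ref{Main0} across a $t$-panel produces a chamber with strictly shorter codistance. It does not: that reduction shortens the codistance only when $\ell(wt)>\ell(w)$; when $\ell(wt)<\ell(w)$ it terminates in the case $\delta^*(D,D^\theta)=twt$ with $twt=w$, i.e.\ the length is preserved. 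This is exactly why Proposition~\ref{Main0} concludes only that $w=w_J$ for a spherical $J$, not that $w=1$; for instance $w=tut=utu$, the longest element of an $A_2$-parabolic, satisfies $twt=w$ with $\ell(tw)<\ell(w)$. You cannot exclude $twt=w$ via Lemma~\ref{lem:coxeter}, because that lemma requires $w\in W_{S\setminus\{t\}}$, which fails precisely when $t$ lies in the support of $w$ --- the case you are in. (Two smaller defects in the same step: the fact that $t$ occurs in a reduced word for $w$ and commutes with $s$ does not let you move that occurrence of $t$ to the front, so to get $\ell(tw)<\ell(w)$ you must choose $t$ in the left descent set of $w$; and being $\{s\}$-opposite forces $\sigma(s)=s$ but not $\sigma(t)=t$ for $t\ne s$.)

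Moreover, even granting the reduction down to $w=w_J$ with $J$ contained in the centraliser of $s$ in $S$, no contradiction has been reached, and irreducibility of $W$ has not genuinely been used. Your appeal to a generator $t$ with $st\ne ts$ does not help directly, because such a $t$ is never in the support of $w$, so the $\{s,t\}$-residue you propose to exploit carries no information about $w$. What is missing is a mechanism connecting $s$ to the support of $w$ through the Coxeter graph, and this is precisely what the paper's proof supplies: it minimises not $\ell(w)$ but the graph distance $\mathrm{cox}(s,w)$ from $s$ to the support of $w$, and crosses a panel whose type $t$ is adjacent to the support yet one step closer to $s$. Such a $t$ is \emph{not} in the support, so in the stuck case $twt=w$ one gets $tw=wt$ with $w\in W_{S\setminus\{t\}}$, Lemma~\ref{lem:coxeter} applies, and the non-commutation of $t$ with its neighbour in the support gives the contradiction; in every other case the Coxeter distance strictly drops. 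Your outline does not contain this idea, and without it (or some substitute that brings irreducibility into play) the argument does not close.
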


\begin{proof}
For the proof we define the \textit{Coxeter distance} $\mathrm{cox}(u,v)$ between $u\in W$ and $v\in W$ to be the minimum distance in the Coxeter graph of $W$ between nodes $s$ and $t$ such that $s$ appears in a reduced expression for $u$, and $t$ appears in a reduced expression for $v$. By \cite[Proposition~2.16]{AB}, this is well-defined.  We set the convention that $\mathrm{cox}(u,v)=\infty$ if either $u$ or $v$ is the identity. (By irreducibility, if $u,v\neq 1$ then $\mathrm{cox}(u,v)<\infty$.)

Suppose that $\theta$ is $\{s\}$-opposite. Let $C$ be a chamber of~$\Delta$ such that the Coxeter distance between $s$ and $w:=\delta^*(C,C^{\theta})$ is minimal. Since $w\in W_{S\backslash\{s\}}$ we have $\mathrm{cox}(s,w)\geq1$. We aim to show that $\mathrm{cox}(s,w)=\infty$ (and so $\delta^*(C,C^{\theta})=1$ for all chambers $C$, hence the result).

Suppose, for a contradiction, that $d=\mathrm{cox}(s,w)$ satisfies $1\leq d<\infty$. Let $s'$ be a generator appearing in a reduced expression for $w$ with $\mathrm{cox}(s,s')=d$. Let $t$ be the second last node on a geodesic in the Coxeter graph from $s$ to $s'$, so that $\mathrm{cox}(s,t)=d-1$ and $\mathrm{cox}(t,w)=\mathrm{cox}(t,s')=1$. By the twin building axiom (Tw3) we can choose a chamber $D$ with $\delta(C,D)=t$ and $\delta^*(D,C^{\theta})=tw$. Then $\delta^*(D,D^{\theta})\in\{tw,twt'\}$ where $\delta(C^{\theta},D^{\theta})=t'$. We consider each case.

Suppose that $\delta^*(D,D^{\theta})=tw$. Since $t$ does not appear in a reduced expression for $w$ we have $\ell(tw)=\ell(w)+1$, and therefore $\mathrm{cox}(s,tw)=d-1$, contradicting the fact that the chamber $C$ minimises Coxeter distance.

Suppose that $\delta^*(D,D^{\theta})=twt'$. Suppose first that $t'\neq t$ (this case only happens if $d>1$). Since $\theta$ induces an automorphism of the Coxeter graph preserving $s$ we have $\mathrm{cox}(s,t)=\mathrm{cox}(s,t')=d-1$. Thus $\ell(twt')=\ell(w)+2$ (since neither $t$ nor $t'$ appear in a reduced expression for $w$). Therefore $\mathrm{cox}(s,twt')=d-1$, a contradiction. So we must have that $t'=t$. Since $\ell(tw)=\ell(w)+1$ we have $\ell(twt)=\ell(w)+2$ or $\ell(twt)=\ell(w)$. If $\ell(twt)=\ell(w)+2$ then $\mathrm{cox}(s,twt)=d-1$, a contradiction. If $\ell(twt)=\ell(w)$ then by the Deletion Condition $twt=w$ (for otherwise $\mathrm{cox}(s,twt)=d-1$, a contradiction). Therefore $tw=wt$, contradicting Lemma~\ref{lem:coxeter} since $\mathrm{cox}(s,s')=1$ and so $s$ and $s'$ do not commute.

 Therefore $\mathrm{cox}(s,w)=\infty$, and the proof is complete.
\end{proof}

\begin{proof}[Proof of Proposition \emph{\ref{Main2}}]
 By Lemmas~\ref{lem1} and~\ref{lem2}, if  $\theta$  is $J$-opposite with $J\subseteq S$ nonempty then  $\theta$ is opposite and so it maps all chambers to opposite chambers.
If $\theta$ maps all chambers to opposite chambers, then it is immediate that  $\theta$  maps all residues to opposite residues (for $C$ containing a residue $R$, $C^\theta$ contains $R^\theta$ and is opposite to $C$), so $\theta$ is $J$-opposite for any $J\subseteq S$.  In particular, $\theta$ is $\{s\}$-opposite for all $s\in S$, which means vertices of type $s$ are mapped to vertices of type $s$ for all $s\in S$.

\end{proof}

\section{Proof of Theorem \ref{Main1}}\label{sect:Main1}

We call a rank~2 building \textit{non-exotic} if it is thick, finite and has parameters $(s,t)$ with $\gcd(s,t)>1$ (by \textit{parameters} $(s,t)$ we mean that every panel of one type contains precisely $s+1$ chambers while every panel of the other type contains precisely $t+1$ chambers). Usually rank 2 buildings are viewed as geometries, where one type of panels is the point set, and the other the set of lines. A line then carries $s+1$ points, and a point is incident with precisely $t+1$ lines. 

It follows from \cite[5.6 Corollary~3]{T1} that every rank~$2$ residue in a locally finite $2$-spherical twin building of rank at least~$3$ satisfies the \textit{Moufang condition}. Hence it follows from the last sentences of Section~3 of \cite{T0} and the papers \cite{Fon-Sei:73,Fon-Sei:74} that every rank 2 residue in a locally finite 2-spherical twin building of rank at least 3 is non-exotic. (In particular this applies to any finite spherical building of rank at least~$3$). Hence the following theorem, which we prove in this section, implies Theorem~\ref{Main1}.

\begin{theorem}\label{thm:precise}
Suppose that $\Delta$ is a twin building of type $(W,S)$ with at least one non-exotic rank~$2$ residue. Let $J\subseteq S$ be nonempty. Then an automorphism $\theta:\Delta\to\Delta$ cannot map every $S\backslash J$-residue to an opposite residue.
\end{theorem}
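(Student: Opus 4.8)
The plan is to reduce Theorem~\ref{thm:precise} to a purely rank~$2$ statement and then to resolve that statement for each type of non-exotic generalised polygon. By Theorem~\ref{Main2}, if $\theta$ were $J$-opposite for some nonempty $J\subseteq S$ then $\theta$ would be $S$-opposite: it maps \emph{every} chamber to an opposite chamber, is type preserving, and maps every residue to an opposite residue. In particular, picking a panel $\{s\}$ appearing in a non-exotic rank~$2$ residue of type $\{s,u\}$, the restriction of $\theta$ to that residue must also be opposite on it. So it suffices to show: \emph{no automorphism of a non-exotic rank~$2$ building (generalised polygon) maps every chamber to an opposite chamber}. I would first carefully justify this restriction step — one needs to check that $\theta$ induces a well-defined automorphism of an appropriate rank~$2$ residue (or of the rank~$2$ ``double coset geometry''), which follows because $\theta$ is type preserving and sends the residue to an opposite residue of the same type, and because opposition within the twin building, when restricted to two opposite residues, coincides with the opposition relation in the spherical building they span (via the twin building axioms and \cite[Lemma~5.139]{AB} / \cite{T1}).

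Next I would dispose of the polygons of diameter $n\geq 3$. For these, the results of \cite{TTV0} (see also \cite{T}) already assert that a finite Moufang generalised $n$-gon with $n\in\{3,4,6,8\}$, $n\geq4$, admits no automorphism mapping every chamber to an opposite — and a non-exotic rank~$2$ residue is Moufang by \cite[5.6 Corollary~3]{T1} and hence one of these. The remaining, and genuinely new, case is that of \emph{finite projective planes} ($n=3$): here \cite{TTV0} applies only under extra hypotheses, and an automorphism of a projective plane mapping every chamber (flag) to an opposite one is precisely a \emph{duality} with no absolute point (a point never mapped to an incident line). So the crux is the lemma announced in the abstract: \emph{every duality of a thick finite projective plane has an absolute point.}

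For that lemma I would argue via a counting/eigenvalue argument on the incidence structure. A duality $\delta$ of a projective plane of order $q$ has $\delta^2$ a collineation; consider the permutation it induces and the associated incidence relation ``$P$ incident with $P^\delta$''. Encoding this with the point–line incidence matrix $N$ (with $NN^{\top}=qI+J$) and the permutation matrix $Q$ of $\delta$ on points, the number of absolute points is governed by the trace of $NQ$ (or of a related matrix $N Q$ where $Q$ realises $\delta$ as a point-to-line map). The eigenvalues of $N$ are $q+1$ and $\pm\sqrt{q}$, so the trace of $NQ$ is congruent to something forcing it to be nonzero — this is the classical Baer argument that a polarity has $q+1$ absolute points, adapted to a general duality. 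The subtlety, and what I expect to be the main obstacle, is that a general duality need not be a polarity: $\delta^2\neq \mathrm{id}$ in general, so the clean self-adjointness that drives Baer's theorem is lost, and one must instead track the multiplicities with which $\pm\sqrt q$ occur in $NQ$ using the structure of $\delta^2$ as a collineation (and possibly pass to the order of $\delta$, distinguishing the cases according to whether $q$ is a square). One then shows the count of absolute points is $\equiv 1 \pmod{p}$ (or otherwise provably positive), ruling out zero.

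Finally I would assemble the pieces: by the rank~$2$ reduction, $\theta$ restricted to a non-exotic residue $R$ is an opposite-mapping automorphism of a finite Moufang generalised polygon; if $R$ has diameter $\geq4$ this contradicts \cite{TTV0,T}, and if $R$ is a projective plane then $\theta|_R$ is a duality with no absolute point, contradicting the lemma just proved. Either way we reach a contradiction, so $\theta$ cannot be $J$-opposite, proving Theorem~\ref{thm:precise} and hence Theorem~\ref{Main1}. I would flag that the only place the full strength of ``non-exotic'' is used is to invoke the classification inputs guaranteeing the Moufang property and the restriction on $n$; the projective plane lemma itself needs only finiteness and thickness.
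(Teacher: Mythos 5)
Your reduction to rank $2$ is exactly the paper's: Theorem~\ref{Main2} upgrades ``$J$-opposite'' to ``opposite and type preserving'', restriction to a non-exotic rank~$2$ residue $R$ (viewing the pair $(R,R^\theta)$ as a single spherical building) yields an automorphism mapping every chamber to an opposite, and Proposition~\ref{Beukjeeven} disposes of quadrangles, hexagons and octagons, leaving the projective plane case, where the induced map is a duality without absolute points. All of that matches the paper and is correct.

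The gap is in your treatment of the crux lemma (Proposition~\ref{dualitiesareisotropic}). The incidence-matrix/eigenvalue argument you sketch --- expressing the number of absolute points as a trace involving $N$ and the permutation matrix of the duality, and trying to control the multiplicities of $\pm\sqrt{q}$ --- is precisely the method that produces Proposition~\ref{Beukje}, and it succeeds only under the arithmetic conditions (i)--(iii) stated there relating the square-free part $q'$ of $q$ to the order of the duality. Your closing assertion that ``one then shows the count of absolute points is $\equiv 1\pmod{p}$ (or otherwise provably positive)'' is exactly the step this method cannot deliver unconditionally; you have identified the obstacle but not overcome it, and the whole point of the paper's Proposition~\ref{dualitiesareisotropic} is to remove those conditions. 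The paper does so by an entirely different argument: take a minimal counterexample of order $q$ with a duality $\theta$ having no absolute point; Proposition~\ref{Beukje} forces $q$ to be a non-square with $q'$ dividing $|\theta|$, and one writes $|\theta|=2q'r$ and forms the involution $\theta'=\theta^{q'r}$. One then splits into cases according to whether $\theta'$ is an elation, a homology, or a polarity (using the Hughes--Piper classification of involutory collineations and Baer's theorem that a polarity of a plane of non-square order has exactly $q+1$ absolute points, collinear when $q$ is even and no three collinear when $q$ is odd). The elation case gives a fixed flag for $\theta$ directly; in the other cases one extracts a collineation $\theta''=\theta^{\ell}$ of odd prime-power order $p^h$, shows by an orbit count modulo $p$ that it fixes at least $p+1\geq 4$ of the relevant flags, deduces that its fixed points and lines form a proper subplane on which $\theta$ still acts as a duality without absolute points, and contradicts minimality. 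Without an argument of this kind (or a genuinely new strengthening of the eigenvalue method), your proof of Theorem~\ref{thm:precise} is incomplete at its essential point.
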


We will deduce Theorem~\ref{thm:precise} from Theorem~\ref{Main2} and the following propositions about automorphisms of rank~$2$ buildings. We will take the geometric view on buildings of rank 2, as noted above. In this point of view, rank~$2$ buildings are precisely the same as \textit{generalised $m$-gons} (bipartite graphs with diameter $m$ and girth~$2m$). By the Feit-Higman Theorem finite thick generalised $m$-gons only exist for $m=2,3,4,6,8$. Generalised $m$-gons  with $m=4,6,8$ are called generalised \textit{quadrangles}, \textit{hexagons}, and \textit{octagons} (respectively). 

Recall that a \textit{collineation} (respectively \textit{duality}) of a generalised $m$-gon $\Delta$ is a type preserving (respectively type swapping) automorphism $\theta:\Delta\to\Delta$.  Here, the type is with respect to the `single' spherical building structure, and not as a twin building (see \cite[Example~5.136]{AB}). Thus a duality maps points to lines. An \textit{absolute point} of a duality of a projective plane is a point which is mapped to a line incident with the point.

We record a result from~\cite{TTV0} (see also~\cite{T}).

\begin{proposition}[\cite{TTV0} and \cite{T}]\label{Beukjeeven}
Let $\Delta$ be a finite generalised quadrangle, hexagon or octagon with parameters $(s,t)$, where $\gcd(s,t)>1$. Then every collineation of $\Delta$ maps some point to a point at distance at most $2$ in the incidence graph. In particular, no collineation is opposite. 
\end{proposition}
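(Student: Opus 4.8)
This result is cited verbatim from \cite{TTV0} and \cite{T}, so strictly speaking no proof is owed here; but since the whole strategy of Section~\ref{sect:Main1} rests on it, let me sketch how one would establish it, and flag where the statement of Theorem~\ref{thm:precise} genuinely needs new input beyond what \cite{TTV0} provides.

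The plan is to treat the three even-gon cases uniformly via a counting/eigenvalue argument in the collineation module. First I would fix a finite thick generalised $m$-gon $\Delta$ with $m\in\{4,6,8\}$ and parameters $(s,t)$, and let $\theta$ be a collineation, inducing a permutation matrix $M$ on the set of chambers (or, better, separate permutation matrices on the point set $\mathcal P$ and the line set $\mathcal L$). The key object is the point graph $\Gamma$ whose adjacency matrix $A$ has a small, explicitly known set of eigenvalues determined by $(s,t)$ and $m$ (for $m=4$ they are $s+t$, $-1-t$, $s-1$; similarly for $m=6,8$). One shows, by a straightforward but careful incidence-geometry argument, that if $\theta$ moves every point to an \emph{opposite} point then the permutation matrix $P$ of $\theta$ on $\mathcal P$ is supported on the ``distance-$m$'' relation, and one then computes the trace of $PA^{k}$ for suitable $k$ in two ways: combinatorially (it counts fixed configurations, which must be nonnegative) and spectrally (it is a $\mathbb Z$-linear combination of $k$-th powers of the eigenvalues of $A$, weighted by the eigenvalue multiplicities of $P$ restricted to the eigenspaces of $A$). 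Forcing nonnegativity of all these traces while the support of $P$ avoids the diagonal and the near-diagonal relations yields a contradiction — this is the hard arithmetic heart, and it is exactly where the hypothesis $\gcd(s,t)>1$ enters, since for $\gcd(s,t)=1$ the relevant Diophantine obstruction disappears (as Remark~4.5 of \cite{TTV2} shows, those cases can genuinely admit opposite collineations).

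The step I expect to be the main obstacle is not the setup but the case analysis for the octagon ($m=8$), where the eigenvalues of $A$ are irrational (they involve $\sqrt{2st}$) and the multiplicity constraints force $2st$ to be a perfect square — one must combine this integrality constraint with the $\gcd$ hypothesis and the Feit--Higman divisibility conditions to close off the remaining finitely many residual parameter families by hand. The quadrangle case is the cleanest; the hexagon case requires tracking one extra relation in the distance partition.

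Finally, I would note that the proof of Theorem~\ref{thm:precise} does \emph{not} follow from Proposition~\ref{Beukjeeven} alone: by Theorem~\ref{Main2} we may assume $J=\{s\}$ is a single node, and a $\{s\}$-opposite $\theta$ restricts, on each rank~$2$ residue through the relevant panels, to either a collineation or a duality of a generalised polygon. Proposition~\ref{Beukjeeven} kills the collineation case for quadrangles, hexagons and octagons; what remains — and what the present paper supplies via its projective-plane lemma — is the duality case, i.e.\ the claim that no duality of a finite projective plane can be opposite, which is equivalent to saying every duality has an absolute point. That lemma, combined with Proposition~\ref{Beukjeeven} and the reduction of Theorem~\ref{Main2}, then gives Theorem~\ref{thm:precise} and hence Theorem~\ref{Main1}.
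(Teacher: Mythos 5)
The paper offers no proof of Proposition~\ref{Beukjeeven}: it is imported verbatim from \cite{TTV0} and \cite{T}, so there is nothing internal to compare your argument against. Your sketch does, however, correctly identify the method actually used in those references, namely Benson-type eigenvalue/trace arguments on the point graph, with the hypothesis $\gcd(s,t)>1$ entering as a divisibility obstruction; and your closing paragraph accurately reflects how the paper assembles Theorem~\ref{thm:precise} from this proposition, Proposition~\ref{dualitiesareisotropic} and Theorem~\ref{Main2}.

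Two small corrections to the sketch itself. First, the hypothesis you should contradict is not that every point is mapped to an \emph{opposite} point but merely that $f_0=f_1=0$, where $f_0$ counts fixed points and $f_1$ counts points mapped to a collinear point; for hexagons and octagons a point at distance greater than $2$ in the incidence graph need not be opposite, and assuming full oppositeness would only prove the weaker ``in particular'' clause. Second, the arithmetic heart is an integrality/congruence condition rather than nonnegativity of traces: for a quadrangle, $\mathrm{tr}\bigl(Q(A+(t+1)I)\bigr)=(1+t)f_0+f_1$ is congruent to $(1+s)(1+t)$ modulo $s+t$ because the nontrivial eigenvalues of $A+(t+1)I$ are $s+t$ and $0$ and the corresponding sums of roots of unity are rational algebraic integers; with $f_0=f_1=0$ this forces $s+t\mid 1+st$, which is impossible when $\gcd(s,t)>1$ since any common divisor of $s$ and $t$ divides both $s+t$ and $st$. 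The hexagon and octagon cases follow the same pattern with more eigenvalues to track, and you are right that the irrationality of $\sqrt{2st}$ (tamed by the Feit--Higman condition that $2st$ is a square) is where the octagon case gets delicate.
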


Dualities of finite projective planes are also treated in \cite{TTV0}, but there is an error in the proof of Corollary~5.5, which is pointed out in \cite{TTV0'}. The correct version can be found in \cite{T}, Corollary~1.4.5, which we restate here.

\begin{proposition}\label{Beukje}
Let $\Delta$ be a finite projective plane of order $q$ and let $\t$ be a duality of $\Delta$ of order $n$. Let $q'$ be the square-free part of $q$ (with $q'=0$ if $q$ is a perfect square). Then $\t$ admits at least one absolute point if one of the following conditions is satisfied.
\begin{enumerate}\item[$(i)$] $q'$ does not divide $n$; \item[$(ii)$] $q'$ is even and divides $n$, but $8$ does not divide $n$; \item[$(iii)$] $q'=3~\mbox{mod }4$, $q'$ divides $n$, but $4$ does not divide $n$.\end{enumerate}  
\end{proposition}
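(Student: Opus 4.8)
The plan is to convert the existence of an absolute point into a trace computation and then pin the trace down by cyclotomic arithmetic. Let $N$ be the point--line incidence matrix of $\Delta$, with rows indexed by points and columns by lines, let $S$ be the permutation matrix of $\theta$ viewed as a bijection from points to lines, and put $M=NS$. This is a $(q^{2}+q+1)\times(q^{2}+q+1)$ matrix of zeros and ones whose $(p,p)$ entry is $1$ exactly when the point $p$ lies on the line $p^{\theta}$, so the number $f$ of absolute points of $\theta$ equals $\mathrm{tr}(M)$. Using $NN^{\mathrm{T}}=N^{\mathrm{T}}N=qI+J$ (with $J$ the all-ones matrix) together with the matrix identity expressing that $\theta$ is a duality, one checks that $M$ is a \emph{normal} matrix, that $M^{2}=qQ+J$ where $Q$ is the permutation matrix of the collineation $g:=\theta^{2}$ acting on the point set, and that $M$ commutes with $Q$ and with $J$ while $M\mathbf{1}=(q+1)\mathbf{1}$.

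Next I would diagonalise $M$. Being normal it is unitarily diagonalisable; the all-ones vector spans a one-dimensional eigenspace for the eigenvalue $q+1$, and on its orthogonal complement $M^{2}=qQ$. Hence every other eigenvalue $\lambda$ of $M$ satisfies $\lambda^{2}=q\zeta$ for some eigenvalue $\zeta$ of $Q$, so $\lambda=\pm\sqrt{q}\,\sqrt{\zeta}$. The eigenvalues of $Q$ are $m$-th roots of unity, where $m$ is the order of the collineation $g=\theta^{2}$ (so $m=n$ if $n$ is odd and $m=n/2$ if $n$ is even), so every such $\lambda$ lies in $\sqrt{q}\cdot\mu_{2m}$, with $\mu_{2m}$ the group of $2m$-th roots of unity. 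Summing the eigenvalues,
\[f=(q+1)+\sqrt{q}\,\gamma\qquad\text{for some }\gamma\in\mathbb{Z}[\zeta_{2m}].\]

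It would then remain to show $f\neq0$ under each hypothesis. Write $q=q_{0}^{2}q'$ with $q'$ square-free. If $q$ is a perfect square then $\sqrt{q}=q_{0}$ is an integer, so $\gamma=(f-q-1)/q_{0}$ is a rational algebraic integer, hence lies in $\mathbb{Z}$; reducing $f=q_{0}^{2}+1+q_{0}\gamma$ modulo $q_{0}\geq2$ gives $f\equiv1\pmod{q_{0}}$, in particular $f\neq0$, which handles the perfect-square case (covered by (i) via the convention $q'=0$). If $q$ is not a perfect square, suppose $f=0$. Then $q_{0}\sqrt{q'}\,\gamma=-(q+1)$ is a nonzero rational number, so $\sqrt{q'}\,\gamma\in\mathbb{Q}$, which forces $\gamma\notin\mathbb{Q}$ and hence $\mathbb{Q}(\sqrt{q'})=\mathbb{Q}(\gamma)\subseteq\mathbb{Q}(\zeta_{2m})$. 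By the conductor criterion for quadratic subfields of cyclotomic fields this requires $q'\mid2m$ if $q'\equiv1\pmod{4}$, and $2q'\mid m$ if $q'\equiv2,3\pmod{4}$. Substituting $m=n$ for $n$ odd and $m=n/2$ for $n$ even, and using that $q'\mid n$ with $q'$ even forces $n$ even, a short case check contradicts each of (i)--(iii): in case (i) one deduces $q'\mid n$, in case (ii) one deduces $8\mid n$, and in case (iii) one deduces that either an even number divides the odd number $n$ or that $4\mid n$.

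I expect the main obstacle to be the spectral step: arranging the incidence and permutation matrices so that $M^{2}=qQ+J$ comes out correctly --- it is exactly because $\theta$ is a duality rather than a collineation that the relevant collineation is $\theta^{2}$, with $Q$ its point permutation matrix of order precisely $m=\mathrm{ord}(\theta^{2})$ --- and verifying that every non-Perron eigenvalue of the normal matrix $M$ genuinely lies in $\sqrt{q}\cdot\mu_{2m}$, which is what converts the parity of $n$ into the correct cyclotomic exponent $2m$. Once the formula $f=(q+1)+\sqrt{q}\,\gamma$ with $\gamma\in\mathbb{Z}[\zeta_{2m}]$ is in hand, the number-theoretic endgame (conductors of real quadratic fields, and the divisibility bookkeeping relating $2m$ back to $n$) is routine.
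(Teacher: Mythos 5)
The paper does not actually prove this proposition: it is restated verbatim from the literature (Temmermans's thesis \cite{T}, Corollary~1.4.5, correcting \cite{TTV0}), so there is no in-paper argument to compare against. Your proof is, however, essentially the standard Benson-type eigenvalue argument that those sources use, and I find it correct: the identities $MM^{\mathrm T}=M^{\mathrm T}M=qI+J$, $M\mathbf 1=(q+1)\mathbf 1$ and $M^{2}=qQ+J$ (with $Q$ the point permutation matrix of $\theta^{2}$) all check out, the simultaneous diagonalisation of the commuting normal matrices $M$ and $Q$ on $\mathbf 1^{\perp}$ gives $f=q+1+\sqrt q\,\gamma$ with $\gamma\in\mathbb Z[\zeta_{2m}]$, and the conductor criterion for $\mathbb Q(\sqrt{q'})\subseteq\mathbb Q(\zeta_{2m})$ yields exactly the divisibility conclusions that contradict each of (i)--(iii). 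Two small remarks: the order $n$ of a duality is automatically even (an odd power of a type-swapping map is type-swapping, hence not the identity), so your case $m=n$ for $n$ odd is vacuous and the bookkeeping simplifies to $2m=n$ throughout; and normality of $M$ uses only that $S$ is a permutation matrix together with $NN^{\mathrm T}=N^{\mathrm T}N=qI+J$ --- the duality property is what you need for $M^{2}=qQ+J$, which you correctly identify as the one computation requiring care (it follows from $p\in(p')^{\theta}\iff(p')^{\theta^{2}}\in p^{\theta}$ and the fact that two lines of a projective plane meet in one point unless equal).
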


In the next proposition we show that these conditions can be omitted (not only for the classical finite projective planes, but for all finite projective planes including potential projective planes of non-prime-power orders). 

\begin{proposition}\label{dualitiesareisotropic}
Any duality of a finite projective plane has an absolute point. 
\end{proposition}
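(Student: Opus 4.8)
The plan is to count absolute points of a duality $\theta$ of a finite projective plane $\Delta$ of order $q$ by a fixed-point / incidence-matrix argument, and show this count cannot be zero. First I would set up the incidence matrix $M$ of $\Delta$ (rows indexed by points, columns by lines) and let $P$ be the permutation matrix of $\theta$ acting on the point set; because $\theta$ is a duality, composing with incidence is symmetric in a suitable sense, and the number of absolute points equals the trace of $PM$ (a point $x$ is absolute iff $x$ is incident with $x^\theta$, i.e.\ the $(x,x)$ entry of $PM$ is $1$). So the goal becomes: $\operatorname{tr}(PM)\neq 0$.

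The key step is to control the eigenvalues. One has $MM^{\mathsf T}=qI+J$ where $J$ is all-ones, so the eigenvalues of $M$ (as a possibly non-normal matrix, or of $MM^{\mathsf T}$) are understood: $MM^{\mathsf T}$ has eigenvalue $(q+1)^2$ once and $q$ with multiplicity $q^2+q$. Since $\theta$ is a duality, $(PM)^2 = P M P M$ is the collineation-matrix of $\theta^2$ composed appropriately, or more usefully $N:=PM$ satisfies $N N^{\mathsf T}$-type relations or $NN$ relates to the incidence structure of $\theta^2$; the standard fact (going back to Baer and used heavily in the literature on polarities) is that the non-trivial eigenvalues of $N=PM$ are algebraic integers of absolute value $\sqrt{q}$, and they occur in complex-conjugate (Galois-conjugate) pairs. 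Thus $\operatorname{tr}(PM) = (q+1) \cdot \chi + (\text{sum of the } q^2+q \text{ eigenvalues of modulus } \sqrt q)$, where $\chi\in\{0,1\}$ records whether the Perron eigenvector survives — here $\chi=1$ since $P$ fixes the all-ones vector — giving $\operatorname{tr}(PM)=q+1+\sqrt q\cdot(\text{an algebraic integer that is a sum of roots of unity times }\sqrt q\text{-units})$. Carefully: write $\operatorname{tr}(PM)=q+1+\sqrt q\, a$ where $a$ is an algebraic integer with all conjugates bounded by $q^2+q$ in absolute value; if $\operatorname{tr}(PM)=0$ then $\sqrt q\, a=-(q+1)$, forcing $q$ to be a perfect square, $q=r^2$, and $r a = -(r^2+1)$, so $r \mid r^2+1$, hence $r\mid 1$, i.e.\ $r=1$ and $q=1$, which is absurd for a projective plane. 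That contradiction closes the argument when $\sqrt q \notin \mathbb{Q}$; the perfect-square case needs the extra care that $a$ lies in $\mathbb{Z}$ (not just an algebraic integer), which follows because $\operatorname{tr}(PM)\in\mathbb Z$ and $\sqrt q\in\mathbb Z$.

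The main obstacle I anticipate is justifying rigorously the claim about the eigenvalues of $N=PM$ — in particular that the multiplicity-$(q^2+q)$ part of the spectrum consists of algebraic integers of absolute value exactly $\sqrt q$, closed under Galois conjugation, with no contribution from a genuine eigenvalue $0$ unless forced. For a polarity ($\theta^2=\mathrm{id}$) this is classical (the symmetric matrix $M$ with $M^2=qI+J$ has eigenvalues $\pm\sqrt q$), but for a duality of arbitrary order $n$ one must instead analyze $\theta$ acting on a vector space: decompose the permutation module into $\theta$-eigenspaces over $\overline{\mathbb Q}$, observe that $M$ intertwines the point-module and line-module and that $\theta$ conjugates one incidence action to the other, so on each pair of eigenspaces $M$ restricts to an operator whose square is $q$ times identity (away from the Perron line), yielding eigenvalue moduli $\sqrt q$. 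I would need to handle the Perron/all-ones eigenvector separately (contributing the $q+1$) and make sure the parity/field-of-definition bookkeeping is clean. Once that spectral description is in hand, the divisibility endgame above is short. An alternative, more elementary route worth mentioning is a direct double-counting: sum over points $x$ the quantity $1$ if $x\in x^\theta$, relate $\sum_x |\{$flags through $x$ fixed setwise$\}|$ to chains $x, x^\theta, x^{\theta^2},\dots$, and use that a duality has even order on the set of absolute points — but I expect the eigenvalue argument to be the cleanest and is the one I would write up, citing the relevant matrix facts from the projective-plane literature for the spectral input.
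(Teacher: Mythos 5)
Your plan is the classical Benson-type eigenvalue argument, and you have correctly identified its weak point yourself: the spectral description of $N=PM$ for a duality of order greater than $2$. This is not a side issue that can simply be ``cited from the projective-plane literature'' --- it is exactly the method behind Proposition~\ref{Beukje} of this paper (from \cite{TTV0} and \cite{T}), and in the hands of those authors it yields the existence of an absolute point only under arithmetic conditions relating the square-free part $q'$ of $q$ to the order $n$ of the duality. Those conditions are precisely the conditions under which $\sqrt{q}$ does \emph{not} lie in the cyclotomic field generated by the relevant roots of unity; when $\sqrt{q}$ does lie in that field, the integrality bookkeeping you describe is exactly where the published argument stops short of the unconditional statement. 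Moreover, the one step of your endgame that you do write out is wrong: from $\sqrt{q}\,a=-(q+1)$ with $a$ an algebraic integer you conclude that $q$ is ``forced to be a perfect square,'' but this does not follow --- $\sqrt{q}$ can be irrational and still lie in $\mathbb{Q}(\zeta_{2n})$ (for instance $\sqrt{2}\in\mathbb{Q}(\zeta_8)$, which is the regime of a duality of order $8$ of a plane of order $2$, a case excluded by all three hypotheses of Proposition~\ref{Beukje}). The relation $\sqrt{q}\,a=-(q+1)$ should instead be attacked by squaring: $a^2=(q+1)^2/q$ would be a rational algebraic integer, forcing $q\mid(q+1)^2$, impossible for $q>1$. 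But even that repair presupposes the unproven spectral input, namely that every eigenvalue of $N$ off the all-ones line is $\sqrt{q}$ times a root of unity, which for a duality of order $2n$ requires establishing and exploiting the identity $N^2=qQ+J$ (with $Q$ the permutation matrix induced by $\theta^2$) rather than the polarity identity $M^2=qI+J$. As written, your proposal has its central claim unproven and its written-out conclusion resting on a false implication.

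For comparison, the paper's proof avoids the spectral method entirely at the point where it becomes delicate. It takes a minimal counterexample $\Delta$ of order $q$ with a duality $\theta$ having no absolute points, invokes Proposition~\ref{Beukje} only in the regimes where it applies (to force $q$ non-square and $q'$ dividing $|\theta|$), passes to the involution $\theta'=\theta^{q'r}$, and then argues structurally: when $\theta'$ is a collineation it is an elation or a homology by the classification of involutory collineations; when $\theta'$ is a polarity it has exactly $q+1$ absolute points by Baer's theorem; and in each case a further power $\theta''$ of $\theta$ of odd prime-power order is shown to fix a proper subplane on which $\theta$ still acts without absolute points, contradicting minimality. If you wish to pursue the eigenvalue route you would need to prove the spectral facts for dualities of arbitrary even order from scratch and then run the integrality argument correctly; be aware that you would then be claiming a substantially simpler proof than the one the experts found necessary, so the burden of verification on that spectral step is high.
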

\begin{proof}
Suppose this is not true and consider a smallest counter-example $\D$, of order $q$, with a duality $\t$ without absolute points.

By Proposition~\ref{Beukje}, we know that every duality of a plane of square order $q$ admits at least one absolute point, so $q$ is not a square. Let $q'\neq 0$ be its square-free part. Also by Proposition~\ref{Beukje}, we know that, if $q'$ does not divide $|\t|$ (the order of $\t$), then $\t$ admits at least one absolute point. Hence we may assume from now on that $q'$ divides  $|\t|$. Now we claim that $|\t|$ can be written as $2q'r$, $r\in\mathbb{N}$. Indeed, if $q'$ is odd, this follows from the fact that $\t$ is a duality. If $q'$ is even, then by Proposition~\ref{Beukje}, $|\t|$ is a multiple of $8$ and the claim follows. 

Let $\t'=\t^{q'r}$. Then $\t'$ is an involution. We now divide our arguments in three cases.

\medskip

\fbox{Case 1: $q'r$ is even and $q$ is even.} 

In this case $\t'$ is a collineation, i.e., a type-preserving automorphism. 

Now we will use the fact that there are only three possible types of collineations of order 2 of a projective plane of order $q$: 
\emph{homologies, elations}, and \emph{Baer involutions}, see Theorems~4.3 and~4.4 of \cite{HP}. They are characterised by their 
set of fixed point and lines. A Baer involution happens only for $q$ a perfect square and it pointwise fixes a Baer subplane (a 
projective plane of order $\sqrt{q}$). Homologies and elations happen  when $q$ is odd and even, respectively, and they are 
central collineations, that is, they have a unique centre (all lines through the centre are fixed), and a unique axis (all points 
on the axis are fixed).
For homologies the centre is not on the axis, for elations it is. 

Hence in our present situation, $\t'$ is an elation with axis $L$ and centre $x$, where $x\in L$. Since $\t$ centralizes $\t'$, $\t$ 
acts on the set of fixed points and fixed lines of $\t'$, and hence must fix the flag $\{x,L\}$. Therefore $x$ is an absolute point for 
$\t$, a contradiction. 

\medskip

\fbox{Case 2: $q'r$ is even and $q$ is odd.}

In this case $\t'$ is again a collineation of order $2$, but since $q$ is odd, $\t'$ is a homology with axis $L$ and centre $x$, where $x\notin L$.  Since $\t$ centralizes $\t'$, $\t$ must interchange $x$ and $L$, and preserve the set $\mathcal{F}$ of flags $\{y,yx\}$ where $y\in L$ and $xy$ denotes the line containing $x$ and $y$. Since $q$ is odd, so is $q'$. Pick a prime $p$ dividing $q'$ and write $|\t|=p^h\ell$, where $\ell$ is not divisible by $p$. Note that $\ell$ is even. Let $\t''=\t^\ell$. It has order $p^h$ and is a collineation. Since $\t''$ centralises $\t'$, it fixes both $x$ and $L$, and preserves the set $\mathcal{F}$. All the orbits of $\t''$ have size $1$ or a power of $p$. Since  $\mathcal{F}$ has size $q+1$, which is congruent to $1$ modulo $p$, $\t''$ must fix at least one flag of $\mathcal{F}$. If  there were only one such flag, then it follows from the fact that $\t$ centralises $\t''$ that $\t$ would fix that flag, and hence the point of that flag is absolute, a contradiction. 

 Let $\mathcal{P}$ be the set of fixed points of $\t''$ and let $\mathcal{L}$ be the set of lines of $\D$ intersecting $\mathcal{P}$ in at least two points.
We claim that $\Delta'=(\mathcal{P},\mathcal{L})$ is a projective plane. By definition, two points are on one line, and since the intersection of two lines with two fixed points each is also fixed, two lines in $\mathcal{L}$ meet in a point of $\mathcal{P}$. The only remaining axiom to check is that $(\mathcal{P},\mathcal{L})$ contains a quadrangle.
 Let $\{z_i,M_i\}$, $i=1,2$, be two flags of $\mathcal{F}$ fixed by $\t''$. Since $\t''$ fixes at least two points of $M_i$, namely $x$ and $z_i$ and all $\t''$-orbits have size $1$ or a power of $p$, there are at least $p+1$ points of $M_i$ in $\mathcal{P}$. Let $y_i$ be a point of  $M_i$ distinct from $x$ and $z_i$, in $\mathcal{P}$ ($i=1,2$). Then $\{y_1,z_1,y_2,z_2\}$ forms a quadrangle.
Hence $\D'=(\mathcal{P},\mathcal{L})$ is a projective plane. Since $\t''$ is not the identity, $\mathcal{P}$ is strictly contained in the pointset of $\D$. 

Since $\t$ centralises $\t''$, $\t$ acts on the projective plane $\D'$, also without absolute points. This contradicts the fact that  $\D$ was a smallest counter-example. 

\medskip

\fbox{Case 3: $q'r$ is odd.} 

In this case $\t'$ is a duality of order $2$, i.e., a polarity, of a projective plane with non-square order $q$.  Baer showed \cite{Ba46} that the number of absolute points 
of $\t'$ is exactly $q+1$. Moreover, for $q$ even, all the absolute points are collinear; for $q$ odd, not more than two absolute points lie on a given line.  
Note that the point $x$ is absolute for $\t'$ if and only if $\{x,x^{\t'}\}$ is a flag fixed by $\t'$.
Let $\mathcal{F}$ be the set of flags fixed by $\t'$; this set has size $q+1$. 

Pick a prime $p$ dividing $q'$ and write $|\t|=p^h\ell$, where $\ell$ is not a multiple of $p$ (note that $p$ is odd and $\ell$ is even). Let $\t''=\t^\ell$. It has order $p^h$ and is a collineation. Since $\t''$ centralises $\t'$, it  preserves the set $\mathcal{F}$. All the orbits of $\t''$ have size $1$ or a power of $p$. Since  $\mathcal{F}$ has size $q+1$, which is congruent to $1$ modulo $p$, $\t''$ must fix at least one flag of $\mathcal{F}$. If there were only one such flag, then it follows from the fact that $\t$ centralises $\t''$ that $\t$ would fix that flag too, and then the point of that flag would be an absolute point for $\t$, a contradiction. Hence  $\t''$ must fix at least $p+1\geq 4$ flags of $\mathcal{F}$.

Let $\mathcal{P}$ be the set of fixed points of $\t''$ and let $\mathcal{L}$ be the set of lines of $\D$ intersecting $\mathcal{P}$ in at least two points.
We claim that $\D'=(\mathcal{P},\mathcal{L})$ is a projective plane. As above, the only significant axiom to check is that $(\mathcal{P},\mathcal{L})$ contains a quadrangle.
If $q$ is odd, then the flags $\{x,x^{\t'}\}$ of $\mathcal{F}$ are such that no three of the absolute points are on a given line, and since there are at least 4 such flags, $(\mathcal{P},\mathcal{L})$ contains a quadrangle.

So assume now that $q$ is even. Then, as noticed above, all the absolute points of $\t'$ form a line, say $L$. In other words, all the points of the flags of $\mathcal{F}$ lie on $L$, and there are at least $p+1\geq 4$ such points which are in $\mathcal{P}$. Since $L$ is the image by $\t'$ of at most one of them (namely $L^{\t'}$, if it is an absolute point), there are at least two points $x_1$, $x_2$ of $\mathcal{P}$ which are on $L$, and such that $x_i^{\t'}\neq L$ and $\{x_i,x_i^{\t'}\}\in\mathcal{F}$ is fixed by $\t''$,   $i=1,2$. Hence $\t''$ fixes $y:=x_1^{\t'} \cap x_2^{\t'}$.
Since $\t''$ fixes at least two points of $x_i^{\t'}$, namely $x_i$ and $y$ and all $\t''$-orbits have size $1$ or a power of $p$, there are at least $p+1$ points of $x_i^{\t'}$ in $\mathcal{P}$. Let $y_i$ be a point of  $x_i^{\t'}$ distinct from $x_i$ and $y$, in $\mathcal{P}$ ($i=1,2$). Then $\{x_1,y_1,x_2,y_2\}$ forms a quadrangle.

Hence, in all cases, $\D':=(\mathcal{P},\mathcal{L})$ is a projective plane. Since $\t''$ is not the identity, $\mathcal{P}$ is strictly contained in the pointset of $\D$. 

Since $\t$ centralises $\t''$, $\t$ acts on the projective plane $\D'$ and as above it has no absolute points, contradicting the fact that  $\D$ was  a smallest counter-example. 
\end{proof}

We now give the proof of Theorem~\ref{thm:precise}.

\begin{proof}[Proof of Theorem \emph{\ref{thm:precise}}] Suppose that $\theta:\Delta\to\Delta$ maps all $S\backslash J$-residues to opposites. Then by Theorem~\ref{Main2}, $\theta$ is type preserving and maps all chambers to opposites. Let $R$ be a non-exotic rank 2 residue of $\Delta$. Then $(R,R^\t)$ forms a twin building with codistance induced by the codistance of $\D$. Since $R$ is of spherical type we can see this twin building as a (single) spherical building $R$ in the usual way. The induced automorphism $\tilde{\t}:R\to R$ is a duality if $R$ is a projective plane, and a collineation for generalised quadrangles, hexagons, and octagons. This automorphism maps all chambers to opposites, contradicting Proposition~\ref{Beukjeeven} and 
Proposition~\ref{dualitiesareisotropic}. 
\end{proof}

We conclude with the following corollary.

\begin{corollary}\label{Main3}
An involution $\theta$ of a spherical building $\Delta$ either maps 
every chamber to an opposite or fixes at least one simplex. In 
particular, every involution of a finite irreducible thick 
spherical building of rank at least $3$ fixes some simplex.
\end{corollary}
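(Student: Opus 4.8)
The plan is to reduce Corollary~\ref{Main3} to Theorem~\ref{thm:precise} via a standard trick: an involution either moves every chamber far away, or it has a ``short'' displacement somewhere that can be propagated to a fixed simplex. So suppose $\theta$ is an involution of the spherical building $\Delta$ that does \emph{not} map every chamber to an opposite. I would first want to view $\Delta$ together with $\theta$ as a twin building: for a spherical building $\Delta$ one obtains a twin building $(\Delta,\Delta,\delta^*)$ by declaring $\delta^*(C,D)=w_0\delta(C,D)$ (where $w_0$ is the longest element), so that opposition in the twin sense coincides with opposition in the single spherical building. One should check (or cite \cite[Example~5.136]{AB}) that this is genuinely a twin building. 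Composing with $\theta$ on one side turns $\theta$ into an automorphism of this twin building interchanging the two halves.

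The second step is to apply Theorem~\ref{thm:precise} (or rather its contrapositive, together with Theorem~\ref{Main2}): since $\theta$ is \emph{not} opposite, by Theorem~\ref{Main2} it is not $J$-opposite for any nonempty $J\subseteq S$; in particular, taking $J=\{s\}$ for any $s\in S$, there is an $(S\setminus\{s\})$-residue --- i.e.\ a vertex of type $s$ --- that is \emph{not} mapped to an opposite vertex. Equivalently, $\delta^*(C,C^\theta)\notin W_{S\setminus\{s\}}$ fails for some chamber; more precisely there is a chamber $C$ with $\delta^*(C,C^\theta)\in W_{S\setminus\{s\}}$, and it is worth isolating a chamber $C$ whose codistance $w=\delta^*(C,C^\theta)$ has minimal length, exactly as in the proof of Proposition~\ref{Main0}. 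The point of minimality is to pin down the structure of $w$.

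Third, I would run the argument of Proposition~\ref{Main0}'s proof essentially verbatim, but now using that $\theta$ is an \emph{involution}. With $w$ of minimal length and $J=\{s:\ell(sw)<\ell(w)\}$, the same thickness/minimality reasoning shows $\ell(w\sigma(s))<\ell(w)$ and $\ell(sw\sigma(s))=\ell(w)$ for all $s\in J$, hence by the Lemma in Section~\ref{section:Main0} that $w=w_J$ is the longest element of a spherical parabolic $W_J$, and the $J$-residue $R$ of $C$ is opposite $R^\theta$. Now the involution hypothesis kicks in: since $\theta^2=1$, the codistance is ``symmetric'' in the sense that applying $\theta$ to the pair $(C,C^\theta)$ gives $(C^\theta,C)$, so $\sigma^2=\mathrm{id}$ and $\sigma(w_J)=w_J^{-1}=w_J$. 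I then want to argue that $R$ itself (as a residue) is $\theta$-invariant: $R^\theta$ is opposite $R$ and also $R^{\theta^2}=R$, and in a spherical building of type $W_J$ the map $\theta$ restricted to $R\cup R^\theta$ is an involutory automorphism of that rank-$|J|$ spherical building swapping two opposite residues --- but wait, if $R\neq R^\theta$ this does not immediately give a fixed simplex.

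The cleaner route, and the one I would actually carry out, is: if $\theta$ is not opposite, pick $C$ with $w=\delta^*(C,C^\theta)$ of minimal length as above; if $w\neq 1$ we are in the situation $w=w_J$ with $J$ nonempty and spherical. The key final observation is that, because $\theta$ is an involution, we can instead look for a chamber with $w=1$ by a descent argument, OR --- the approach I prefer --- observe that the midpoint of a minimal gallery realizing the opposition between $C$ and $C^\theta$ inside $R$ is fixed. Concretely, in the spherical building $R$ of type $(W_J,J)$, the chamber $C$ and its image $C^\theta$ are opposite, the automorphism $\theta|_R$ swaps them, $\theta^2=1$, and $J$ is nonempty; I claim $\theta|_R$ fixes the ``middle'' vertex. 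This is exactly the classical fact that a type-preserving (after applying $\sigma$) involution of a spherical building that swaps two opposite chambers has a fixed point on any minimal gallery between them --- take a minimal gallery $C=C_0,\dots,C_n=C^\theta$; its $\theta$-image is a minimal gallery from $C^\theta$ to $C$, i.e.\ the reverse, so $C_i^\theta=C_{n-i}$ up to the projection, and the residue $\mathrm{proj}_{C_{\lfloor n/2\rfloor}}$ type data yields a simplex fixed by $\theta$. The main obstacle, and the step needing care, is precisely this ``middle of the gallery is fixed'' claim when $\theta$ need not be type-preserving on $R$ (only $\sigma$-twisted): one must check $\sigma$ restricted to $J$ together with the involution condition still forces a fixed simplex, which is where I would invoke that $\sigma$ fixes $w_J$ and that in a thick spherical building an automorphism mapping a chamber $C$ to an opposite chamber $D$ with the induced gallery reversal has $\mathrm{proj}_{E}(C)$ and $\mathrm{proj}_E(D)$ related by $\theta$ for $E$ the mid-residue, giving $E^\theta = E$. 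I expect assembling this mid-gallery fixed-point argument cleanly (handling the parity of $\ell(w_J)$ and the $\sigma$-action) to be the only real work; everything else is a direct appeal to Proposition~\ref{Main0}'s proof and Theorem~\ref{Main2}.
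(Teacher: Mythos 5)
Your reduction of the second assertion to Theorem~\ref{Main1} matches the paper, but the proof of the main dichotomy has a genuine gap, and it sits exactly where you flagged ``the only real work''. The machinery of Proposition~\ref{Main0} (minimal codistance, $w=w_J$, the $J$-residue $R$ opposite $R^\theta$) hands you the \emph{wrong object}: a pair of chambers that are opposite inside the residue pair $(R,R^\theta)$. Your final step is then the claim that an involution swapping two \emph{opposite} chambers fixes a simplex on a minimal gallery between them. That claim is false: the polarity $[a:b:c]\leftrightarrow(a:b:c)$ of $\PG(2,\mathbb{R})$ (quoted in the introduction of the paper) is an involution sending every flag to an opposite flag, and it fixes no simplex at all (no absolute points, and no vertex is fixed since it swaps types). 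Indeed, if your ``classical fact'' were true, the first alternative of the corollary could never occur. The correct fact concerns a \emph{non-opposite} pair, and that is what the paper uses: start from a chamber $C$ with $C^\theta$ not opposite $C$ (such a $C$ exists by hypothesis, and if $C=C^\theta$ you are done), take an apartment $A$ through $C$ and $C^\theta$, compose $\theta$ with the type-preserving isomorphism $\theta'\colon A^\theta\to A$ fixing $C$ and $C^\theta$ so as to obtain an involution of $A$ itself, and pass to the spherical geometric realisation: because $C$ and $C^\theta$ are \emph{not} opposite, the geodesic between their barycentres is unique, so the involution fixes its midpoint, and since $\theta'$ is the identity on the convex closure of $C$ and $C^\theta$ this yields a simplex fixed by $\theta$. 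Your combinatorial substitute $C_i^\theta=C_{n-i}$ is also unjustified: minimal galleries between two chambers are far from unique, so the $\theta$-image of a minimal gallery from $C$ to $C^\theta$ is some minimal gallery back, not necessarily the reverse of the original. The uniqueness that makes the midpoint argument work is a property of geodesics between non-antipodal points on the sphere, not of galleries; this is precisely why the paper moves to the geometric realisation after straightening $\theta$ by $\theta'$.

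Two smaller problems. First, your case analysis is incomplete: if the minimal codistance is $w=1$ (so some chamber \emph{is} mapped to an opposite) while $\theta$ is still not opposite, your argument produces nothing, yet the hypothesis of the corollary only excludes \emph{every} chamber being mapped to an opposite. Second, in the twin-building formalism $C$ and $C^\theta$ lie in different halves, so ``a minimal gallery between $C$ and $C^\theta$'' does not parse there; the whole fixed-point argument must be run in the single spherical building, where the relevant condition is $\delta(C,C^\theta)\neq w_0$ rather than a statement about the codistance.
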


\begin{proof}
Suppose that $\theta$ does not map every chamber to an opposite and 
let $C$ be a chamber such that $C^\theta$ is not opposite $C$. 
Suppose $C\neq C^\theta$. Choose an apartment $A$ through 
$C$ and $C^\theta$ and let $\theta'$ be the unique type preserving isomorphism 
$A^\theta\rightarrow A$ fixing $C$ and $C^{\theta}$. Then $\theta\theta'$ restricted to $A$ is an involution $A\to A$, as $(\theta\theta')^2$  fixes $C$ and $C^{\theta}$ but $\theta\theta'$ switches them. 

Let $\Sigma$ be the geometric realisation (on a sphere) of $A$ as a Coxeter complex. Consider the geodesic joining the (barycentres) of 
the chambers $C$ and $C^{\theta}$. Since $C$ and $C^{\theta}$ are not opposite, this geodesic is unique. In this case $\theta\theta'$ 
fixes the midpoint of the geodesic, and since $\theta'$ acts as the identity on the convex closure of $C$ and $C^\theta$, this implies 
that $\theta$ fixes some simplex.

If $\Delta$ is a finite thick spherical building of rank at least $3$, then all rank~$2$ residues are non-exotic (as we noted in the beginning of this section). By Theorem~\ref{Main1} the involution $\theta:\Delta\to \Delta$ 
cannot map all chambers to opposites, and hence fixes a simplex by the above argument. 
\end{proof}

\newpage
Addresses of the authors:

Alice Devillers\\
School of Mathematics and Statistics\\
The University of Western Australia\\
35 Stirling Highway\\
Crawley Perth WA 6009\\ 
\texttt{alice.devillers@uwa.edu.au}

James Parkinson\\
School of Mathematics and Statistics\\
The University of Sydney\\
NSW, 2006, Australia\\
\texttt{jamesp@maths.usyd.edu.au}

Hendrik Van Maldeghem\\
Department of Mathematics\\
Ghent University\\
Krijgslaan 281, S22,\\
9000 Gent, Begium\\
\texttt{hvm@cage.UGent.be}

\end{document}